\theoremstyle{definition}
\newtheorem{theorem}{Theorem}[section]
\newtheorem{corollary}[theorem]{Corollary}
\newtheorem{lemma}[theorem]{Lemma}
\newtheorem{proposition}[theorem]{Proposition}
\newtheorem{notation}{Setup}[section]
\theoremstyle{definition}
\newtheorem{definition}[theorem]{Definition}
\newtheorem{remark}[theorem]{Remark}
\newtheorem{pregunta}[theorem]{Question}
\newcommand{\m}{\mathfrak{m}}
\newcommand{\fa}{\mathfrak{a}}
\newcommand{\QQ}{\mathbb{Q}}
\newcommand{\cJ}{\mathcal{J}}
\newcommand{\cM}{\mathcal{M}}
\newcommand{\cO}{\mathcal{O}}
\newcommand{\ZZ}{\mathbb{Z}}
\newcommand{\NN}{\mathbb{N}}
\newcommand{\KK}{\mathbb{K}}
\newcommand{\RR}{\mathbb{R}}
\newcommand{\details}[2][]{} 
\newcommand{\Spec}{\operatorname{Spec}}
\newcommand{\Ker}{\operatorname{ker}}
\newcommand{\IM}{\operatorname{Im}}
\newcommand{\Tor}{\operatorname{Tor}}
\renewcommand{\and}{ \quad \text{and} \quad }
\title{Poincar\'e series of multiplier and test ideals }
\author[J. \`Alvarez Montaner ]{Josep \`Alvarez Montaner{$^1$}}
\address{Departament de Matem\`atiques\\
Universitat Polit\`ecnica de Catalunya\\ Av.~Diagonal 647, Barcelona
08028, Spain} \email{josep.alvarez@upc.edu}
\author[]{Luis N\'u\~nez-Betancourt${^2}$}
\address{Centro de Investigaci\'on en Matem\'aticas, Guanajuato, Gto., M\'exico}
\email{luisnub@cimat.mx}
\thanks{{$^1$}Partially supported by grants  MTM2015-69135-P (MINECO/FEDER), 2017SGR-932 (AGAUR) and PID2019-103849GB-I00 (AEI/10.13039/501100011033). }
\thanks{{$^2$}Partially supported by CONACYT Grant 284598 and C\'atedras Marcos Moshinsky.}
\begin{document}
\maketitle

\begin{abstract}
We prove the rationality of the Poincar\'e series of multiplier ideals in any dimension and thus extending the main results  for surfaces of Galindo and Monserrat  and Alberich-Carrami\~nana et al. Our results also hold for Poincar\'e series of  test ideals. In order to do so, we introduce a theory of Hilbert functions indexed over $\RR$ which gives an unified treatment of both cases. %Our results  for surfaces provide an algebraic formula  for the excess associated to the maximal jumping divisor $H_c$. 
\end{abstract}

\section{Introduction}

Let $A$ be a  commutative Noetherian ring containing a field $\KK$. 
Assume that $A$ is either local or graded with maximal ideal $\m $ 
and let $\fa$ be  an $\m$-primary ideal.  Depending on the characteristic of the base field we may find two parallel sets of invariants associated to the pair $(A,\fa^c)$ where $c$ is a real parameter. In characteristic zero we have the theory of {\it multiplier ideals} which play a prominent role  in birational geometry and are defined using resolution of singularities  (see \cite{Laz04} for more insight). In positive characteristic we may find the so-called {\it test ideals} which originated from the theory of tight closure \cite{HH, HY} and are defined using the Frobenius endomorphism \cite{BMS, Sch11, Bli}.
Despite its different origins, it is known that under some conditions on $A$, the reduction mod $p$ of a multiplier ideal is the corresponding test ideal \cite{Smi, Har, HY, Tak, MS, dFDTT, CEMS} (see also \cite{SurveyTestIDeals,BFS}). Moreover, both theories share a lot of common properties which we summarize as saying that they form a filtration of  $\m$-primary ideals 
$$
\cJ  : \hskip 5mm A \varsupsetneq \cJ_{\alpha_1}\varsupsetneq \cJ_{\alpha_2}\varsupsetneq\ldots\varsupsetneq  \cJ_{\alpha_i}\varsupsetneq\ldots
$$ and the indices where there is an strict inequality is, under some assumptions on $A$, a discrete set of rational numbers \cite{Laz04, CEMS, BMS, TT, KLZ, BSTZ, Sch211, ST12}. 
The {\it multiplicity} of $c\in \RR_{>0}$ is defined as 
$m(c) = \dim_{\KK} \left(\cJ_{c-\varepsilon} / \cJ_{c}\right),$ for $\varepsilon >0$ small enough \cite{ELSV04}.  In order to gather the information given by these ideals and its multiplicities, we consider the {\it Poincar\'e series} of $\cJ$ $$
P_{\bf \cJ} (T)= \sum_{c\in \RR_{>0}} \dim_{\KK} \left(\cJ_{c-\varepsilon} / \cJ_{c}\right) \hskip 1mm T^{c}.
$$  

The natural question is whether this is a rational function, in the sense that it belongs to the field of fractional functions $\QQ(z)$ where the indeterminate $z$ corresponds to a fractional power $T^{1/e}$ for a suitable $e\in \NN_{>0}$. 

\vskip 2mm

Galindo and Monserrat \cite{GM10} proved that this rationality property holds for  multiplier ideals associated to simple $\m$-primary ideals in a complex smooth surface and provided an explicit formula. These results were extended later on  by Alberich-Carrami\~nana et al. \cite{ACAMDCGA13} (see also \cite{ACAMDCGA18}) to the case of multiplier ideals associated to any $\m$-primary ideal in a complex surface with rational singularities.  The techniques used in both cases rely on the theory of singularities in dimension two and, in particular, the fact that the data coming from the log-resolution of any ideal can be encoded in a combinatorial object such as the \emph{dual graph}. In the case of simple ideals, the divisors corresponding to the star vertices of the graph measure the difference between a multiplier ideal and its preceding. In general one needs the notion of \emph{maximal jumping divisor} \cite{ACAMDCGA13} to account for this difference. The formula obtained for the Poincar\'e series is then described in terms of the \emph{excesses} of these maximal jumping divisors.  During the preparation of this manuscript, we learned that Pande \cite{Pan} has extended these results to the case of smooth varieties in arbitrary dimension.

\vskip 2mm

In this work, we show the rationality of the Poincar\'e  series of multiplier ideals of  $\m$-primary ideals in any  normal variety in arbitrary dimension (see Theorem \ref{thm_mult_rational} and Corollary  \ref{CMcaseMult} for the Cohen-Macaulay case). 
Furthermore, we  also prove the rationality of the Poincar\'e  series  for test ideals of $\m$-primary ideals in $F$-finite rings that are strongly $F$-regular in the punctured spectrum (see Theorem \ref{thm_tau_rational} and Corollary  \ref{CMcaseTest}  for the Cohen-Macaulay case). As a particular case, we obtain the rationality of $P_{\bf \cJ} (T)$ for ideals in normal surfaces in prime characteristic.

\vskip 2mm

Our approach  is completely algebraic, and it  provides an unified proof of the rationality of the Poincar\'e series for both the multiplier and the test ideals in any dimension as long as we have discreteness of the jumping numbers and Skoda's theorem. 
We point out that our main results does not require the rationality of the jumping numbers. Examples of non-rational jumping numbers of multiplier ideals exist by work of Urbinati \cite{Urb12}. The rationality of the Poincar\'e series in this case means  that it belongs to the field of fractional functions 
$\QQ(T^{\alpha_1}, \dots, T^{\alpha_s})$, where $\alpha_1, \dots , \alpha_s \in \RR$ is a finite set of jumping numbers.

\vskip 2mm

To such purpose we develop a theory of Hilbert functions indexed over $\RR$ that should be of independent interest. More precisely, in Section \ref{Sec2} we develop the notion of $\RR$-good $\fa$-filtrations associated to a finitely generated $A$-module which is an extension of the well-known theory of good $\fa$-filtrations. In this general framework we can define the multiplicity of any module in the filtration and the corresponding Poincar\'e series. The main result is Theorem \ref{thm_rational} where we prove the rationality of such a series. In Section \ref{Sec3} we specialize our main result to the case of multiplier ideals and test ideals. We also extend to arbitrary dimension the notion of maximal jumping divisor (see Definition \ref{jumping_divisor})  and give a formula for the multiplicity (see Proposition \ref{multiplicity_formula}).
In Section \ref{Sec4} we provide a different approach to the theory of $\RR$-good $\fa$-filtrations in the case of Cohen-Macaulay rings that gives a simpler formula for the Poincar\'e series (see Theorem \ref{prop1}). By comparing our results with the ones previously obtaining by geometric methods, we yield an algebraic formula for the excess associated to the maximal jumping divisor (see Proposition \ref{PropExcess}).

\vskip 5mm

{ {\bf Acknowledgements:} Part of this work was done during a research stay of the first author at CIMAT, Guanajuato  supported by a Salvador de Maradiaga grant (ref. PRX 19/00405). He wants to thank the people at  CIMAT for the warm welcome.  We are grateful to Swaraj  Pande for sharing a preliminary version of his work. We also acknowledge helpful discussions with V\'ictor Gonz\'alez-Alonso and Mart\'i Lahoz. 
 }

\section{$\RR$-good filtrations} \label{Sec2}

Let $A$ be a  commutative Noetherian ring. %containing a field $\KK$. 
Assume that $A$ is either local or graded with maximal ideal $\m $ 
and let $\fa$ be  an $\m$-primary ideal.   The theory of {\it good $\fa$-filtrations} gives an approach to the study  of Hilbert functions
that covers most of the classical results in an unified way. We start recalling briefly this notion but we refer to Rossi and Valla's monograph \cite{RV10} and the references therein for more insight.

\vskip 2mm

Let $M$ be a finitely generated $A$-module such that $\lambda\left({M}/{\fa M}\right)< \infty$, where $\lambda(\cdot)$ denotes the length as $A$-module.  A  {\it good $\fa$-filtration} on $M$ is a decreasing filtration 
$$\cM :  \hskip 5mm M=M_0 \supseteq M_1 \supseteq \cdots $$ by $A$-submodules of $M$ such that $M_{j+1}=\fa M_j$ for $j \gg 0$ large enough.
Under these premises we may consider the {\it Hilbert} and  the  {\it Hilbert-Samuel function}  of $M$ with respect to the filtration $\cM$ defined as $$H_{\cM}(j):= \lambda\left({M_j}/{M_{j+1}}\right) \hskip 5mm {\rm and}  \hskip 5mm H^1_{\cM}(j):= \lambda\left({M}/{M_{j}}\right)$$ respectively.
Moreover, we consider the {\it Hilbert} and  the  {\it Hilbert-Samuel series} 
$$ HS_{\cM}(T):=\sum_{j\geq 0} \lambda\left({M_j}/{M_{j+1}}\right)  T^j\hskip 5mm {\rm and}  \hskip 5mm HS^1_{\cM}(T):=\sum_{j\geq 0} \lambda\left({M}/{M_{j}}\right)  T^j.$$ 
Notice that we have $ HS_{\cM}(T)=  (1-T) HS^1_{\cM}(T)$. 
As a consequence of the Hilbert-Serre  Theorem, we can express them as rational functions
$$ HS_{\cM}(T) =(1-T) HS^1_{\cM}(T) = (1-T) \frac {h_{\cM}(T)}{(1-T)^{d+1}}, $$ where $h_{\cM}(T) \in \ZZ[T]$ satisfies $h_{\cM}(1)\neq 0$ and $d$ is the Krull dimension of $M$.  The polynomial $h_{\cM}(T)$ is the {\it $h$-polynomial } of $\cM$.

\vskip 2mm

The aim of this section is to extend the notion of good $\fa$-filtrations by allowing filtrations indexed over $\RR$ and thus mimicking  properties satisfied by filtrations given by multiplier and test ideals.

\begin{definition}\label{def-good}
Let $M$ be a finitely generated $A$-module such that $\lambda\left({M}/{\fa M}\right)< \infty$.
An $\RR$-good  $\fa$-filtration
is a decreasing filtration ${\bf \cM}:=\{M_{\alpha}\}_{\alpha \geq  0}  $ of submodules of $M_0=M$ , indexed by a discrete set of positive real numbers such that 
$M_{\alpha +1}=	\fa M_{\alpha}$ for all $\alpha > j$ with $j \gg 0$ large enough. % In the sequel we will refer to $m$ as the {\it Skoda number}.
We call it a  $\QQ$-good  $\fa$-filtration when the set of indices is contained in $\QQ$.	
\end{definition}

Indeed, we may think of  ${\bf \cM}$ as a filtration of submodules $M_{c}$  indexed over $\RR$ for which there exist an increasing sequence of real numbers 
$0 < \alpha_1 < \alpha_2 < \ldots$ such that $M_{\alpha_i}=M_c\varsupsetneq M_{\alpha_{i+1}}$ for any $c \in \left[\alpha_i,\alpha_{i+1}\right)$. In particular we have a discrete  filtration of submodules
$$
\cM  :  \hskip 5mm M \varsupsetneq M_{\alpha_1}\varsupsetneq M_{\alpha_2}\varsupsetneq\ldots\varsupsetneq  M_{\alpha_i}\varsupsetneq\ldots
$$ and we say that the $\alpha_i$ are the {\it jumping numbers} of ${\bf \cM}$. A crucial observation is that, once we fix an index $c\in \RR$, 
the filtration 
$$\cM_c :  \hskip 5mm M_c \supseteq M_{c+1} \supseteq M_{c+2} \supseteq \cdots $$
is a good $\fa$-filtration.

\begin{definition}  \label{def-mult}
Let ${\bf \cM}:=\{M_{c}\}_{c \geq  0}  $ be an $\RR$-good  $\fa$-filtration.  We define
the multiplicity of   $c\in \RR_{>0}$ as
$$
m\left(c\right) := \lambda \left({M_{c-\varepsilon}}/{M_c}\right)
$$ for $\varepsilon>0$ small enough. With this definition, it is clear that $c$ is a jumping number if and only if $m\left(c\right)>0$.
\end{definition}

\begin{definition} \label{def-Poincare}
Let ${\bf \cM}:=\{M_{c}\}_{c \geq  0}  $ be an $\RR$-good  $\fa$-filtration.  We define the Poincar\'e series of ${\bf \cM}$ as
$$
P_{\bf \cM} (T)= \sum_{c\in \RR_{>0}} m(c) \hskip 1mm T^{c}.
$$

\end{definition}

The question that we want to address is whether the Poincar\'e series is rational in the sense that it belongs to the field of fractional functions 
$\QQ(T^{\alpha_1}, \dots, T^{\alpha_s})$, where $\alpha_1, \dots , \alpha_s \in \RR$ is a finite set of jumping numbers. In the case of $\QQ$-good  $\fa$-filtrations, the rationality of the Poincar\'e series means that it belongs to the field of fractional functions 
$\QQ(T^{1/e})$ where $e \in \NN_{>0}$ is the least common multiple of the denominators of all the jumping numbers.

\begin{proposition} \label{prop_rational}
Let ${\bf \cM}:=\{M_{c}\}_{c\geq  0}  $ be an $\RR$-good  $\fa$-filtration. Given $c\in \RR_{> 0}$ we have that
$$\sum_{j\geq 0} m(c + j) T^{j}$$ is a rational function in $\QQ(T)$.
\end{proposition}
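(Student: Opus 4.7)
The plan is to associate to the parameter $c$ two auxiliary good $\fa$-filtrations on related submodules of $M$, invoke the classical Hilbert--Serre theorem to get rationality of their Hilbert series, and then extract $P(T) := \sum_{j \geq 0} m(c+j) T^j$ by a length-additivity argument.

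First I would consider the right-shift filtration $\cM^+ := \{M_{c+j}\}_{j \geq 0}$ on $M_c$; it is good because for $j \gg 0$ the index $c+j$ exceeds the threshold past which $M_{\alpha+1} = \fa M_\alpha$. Second, I would consider the left-limit filtration $\cM^- := \{M_{c+j}^-\}_{j \geq 0}$, where $M_{c+j}^- := \bigcap_{\beta < c+j} M_\beta$. Because $\cM$ is discrete, this intersection stabilizes to $M_{c+j-\varepsilon}$ for $\varepsilon > 0$ small enough (depending on $j$), and the same large-index argument applied to $\alpha = c+j-\varepsilon$ shows that $\cM^-$ is a good $\fa$-filtration on $M_c^-$. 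By Hilbert--Serre, $HS_{\cM^+}(T)$ and $HS_{\cM^-}(T)$ both lie in $\QQ(T)$.

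The key computation is the length identity coming from the chain $M_{c+j}^- \supseteq M_{c+j} \supseteq M_{c+j+1}^- \supseteq M_{c+j+1}$. By definition of multiplicity, $\lambda(M_{c+j}^-/M_{c+j}) = m(c+j)$ and $\lambda(M_{c+j+1}^-/M_{c+j+1}) = m(c+j+1)$, so additivity of length gives
\[
\lambda(M_{c+j}^-/M_{c+j+1}^-) - \lambda(M_{c+j}/M_{c+j+1}) = m(c+j) - m(c+j+1).
\]
Multiplying by $T^{j+1}$ and summing over $j \geq 0$, the right-hand side telescopes to $(T-1)P(T) + m(c)$ while the left-hand side becomes $T\bigl(HS_{\cM^-}(T) - HS_{\cM^+}(T)\bigr)$, whence
\[
P(T) = \frac{T\bigl(HS_{\cM^-}(T) - HS_{\cM^+}(T)\bigr) - m(c)}{T-1} \in \QQ(T).
\]

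The main point requiring care is that $\cM^-$ is a bona fide good $\fa$-filtration. This relies on two features baked into Definition \ref{def-good}: the discreteness of the jumping numbers (so that $M_{c+j}^-$ is a well-defined $A$-submodule, equal to $M_{c+j-\varepsilon}$ for all sufficiently small $\varepsilon$) and the fact that $M_{\alpha+1} = \fa M_\alpha$ is imposed for every sufficiently large real $\alpha$, not just integer $\alpha$, so the stability condition transfers to the left-limit indexing.
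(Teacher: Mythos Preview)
Your proof is correct and follows essentially the same strategy as the paper: introduce the two good $\fa$-filtrations $\cM^+=\cM_c$ and $\cM^-=\cM_{c-\varepsilon}$, invoke Hilbert--Serre for their rationality, and recover $P(T)$ by length additivity. The paper packages the last step a bit differently---computing $\lambda(M_{c-\varepsilon}/M_{c+j})$ in two ways via short exact sequences and working with the Hilbert--Samuel series $HS^1$ rather than your telescoping identity with $HS$---but the content is the same, and your explicit left-limit definition of $\cM^-$ is in fact a cleaner way to handle the ``$\varepsilon$ small enough'' issue that the paper leaves implicit.
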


\begin{proof}
Recall that the Hilbert series $HS^1_{\cM_{c-\varepsilon}}(T)$ and $HS^1_{\cM_c}(T)$  associated to the good $\fa$-filtrations $\cM_{c-\varepsilon}$ and $\cM_{c}$ are rational functions. 
From the short exact sequence 
$$\xymatrix{0\ar[r] & {M_c}/{M_{c+j}}\ar[r]& {M_{c-\varepsilon}}/{M_{c+j}}\ar[r]& {M_{c-\varepsilon}}/{M_{c}}\ar[r]& 0}$$
we get 
$$\sum_{j\geq 0} \lambda \left( {M_{c-\varepsilon}}/{M_{c+j}}\right) T^j = HS^1_{\cM_c}(T) + m(c) \frac{1}{1-T}. $$
Analogously, from the short exact sequence 
$$\xymatrix{0\ar[r] & {M_{c-\varepsilon + j}}/{M_{c+j}}\ar[r]& {M_{c-\varepsilon}}/{M_{c+j}}\ar[r]& {M_{c-\varepsilon}}/{M_{c-\varepsilon + j}}\ar[r]& 0}$$ we get 
\begin{align*}
\sum_{j\geq 0} m(c + j) T^{j} &= \sum_{j\geq 0} \lambda \left( {M_{c-\varepsilon}}/{M_{c+j}}\right) T^j - HS^1_{\cM_{c-\varepsilon}}(T)\\
&= m(c) \frac{1}{1-T} + HS^1_{\cM_c}(T)  - HS^1_{\cM_{c-\varepsilon}}(T) \\
&= \frac{m(c) }{1-T} + \frac {h_{\cM_c}(T) - h_{\cM_{c-\varepsilon}}(T) }{(1-T)^{d+1}}
\end{align*}
and thus it is a rational function. Here,  $h_{\cM_c}(T)$ and $h_{\cM_{c-\varepsilon}}(T)$ are the $h$-polynomials of the good $\fa$-filtrations $\cM_{c-\varepsilon}$ and $\cM_{c}$ respectively.
\end{proof}

\begin{theorem}\label{thm_rational}
Let ${\bf \cM}:=\{M_{c}\}_{c \geq  0}  $ be an $\RR$-good  $\fa$-filtration. Then, the Poincar\'e series $P_{\bf \cM} (T)$ is rational.
Moreover we have $$
P_{\bf \cM} (T)= \sum_{c\in (0,1]} \left( \frac{m(c) }{1-T} + \frac {h_{\cM_c}(T) - h_{\cM_{c-\varepsilon}}(T) }{(1-T)^{d+1}}\right)\hskip 1mm T^{c}, 
$$ where $h_{\cM_c}(T)$ and $h_{\cM_{c-\varepsilon}}(T)$ are the $h$-polynomials of the good $\fa$-filtrations $\cM_{c-\varepsilon}$ and $\cM_{c}$ respectively.
\end{theorem}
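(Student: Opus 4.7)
The plan is to regroup the Poincaré series according to the fractional part of the exponent and then reduce each block to Proposition \ref{prop_rational}. Every positive real $\alpha$ admits a unique decomposition $\alpha = c + j$ with $c \in (0,1]$ and $j \in \ZZ_{\geq 0}$ (take $j = \lceil \alpha \rceil - 1$). Regrouping the defining series gives
$$P_{\bf \cM}(T) = \sum_{c \in (0,1]} T^c \sum_{j \geq 0} m(c+j)\, T^j,$$
and applying Proposition \ref{prop_rational} to each inner sum produces exactly the closed-form identity stated in the theorem.

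What remains is to show that only finitely many values of $c$ contribute a nonzero summand, so that the displayed expression genuinely lies in $\QQ(T^{\alpha_1}, \ldots, T^{\alpha_s})$ for a finite set of jumping numbers. If no element of $\{c + j : j \geq 0\}$ is a jumping number of $\bf \cM$, then $M_{c+j-\varepsilon} = M_{c+j}$ for every $j \geq 0$ and sufficiently small $\varepsilon > 0$, so the filtrations $\cM_c$ and $\cM_{c-\varepsilon}$ coincide term by term. Consequently $m(c) = 0$ and $h_{\cM_c}(T) = h_{\cM_{c-\varepsilon}}(T)$, and the corresponding term vanishes identically. Hence only the distinct fractional parts of jumping numbers contribute.

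The final and most substantive step is to verify that this set of fractional parts is finite. Only finitely many jumping numbers lie below the threshold $j$ of Definition \ref{def-good} by discreteness; choosing $j$ to be a positive integer and iterating $M_{\alpha+1} = \fa M_\alpha$ gives $M_{\alpha+n} = \fa^n M_\alpha$ for all $n \in \ZZ_{\geq 0}$ and $\alpha > j$. Therefore every jumping number above $j$ has fractional part equal to that of some jumping number in the bounded interval $(j, j+1]$, and the latter is itself finite by discreteness. The main obstacle is this last finiteness claim, which reduces quickly to the defining $\fa$-adic recursion past the threshold together with discreteness of the jumping set; once this is in place, the displayed formula immediately exhibits $P_{\bf \cM}(T)$ as an element of the stated field of rational functions.
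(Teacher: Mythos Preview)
Your proof is correct and follows the same approach as the paper: regroup the series by fractional part and apply Proposition~\ref{prop_rational} to each inner sum. You go further than the paper's two-line argument by explicitly verifying that only finitely many $c\in(0,1]$ contribute (via the $\fa$-adic recursion past the threshold and discreteness), a point the paper leaves implicit but which is needed for the stated notion of rationality.
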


\begin{proof}
We have 
$$
P_{\bf \cM} (T)= \sum_{c\in \RR_{>0}} m(c) \hskip 1mm T^{c} =.\sum_{c\in (0,1]} \left(\sum_{j\in\ZZ_{\geq0}} m(c+j) T^j \right)\hskip 1mm T^{c}
$$ and thus the result follows from Proposition \ref{prop_rational}. 
\end{proof}

\section{Poincar\'e series of multiplier and test ideals}  \label{Sec3}
In this section we turn our attention to the case where $A$ contains a field $\KK$ and the $\RR$-good $\fa$-filtration that we  consider is given by a 
filtration of  $\m$-primary ideals %$\cJ:=\{\cJ_{c}\}_{c\in R_{>0}}$
$$
\cJ  : \hskip 5mm A \varsupsetneq \cJ_{\alpha_1}\varsupsetneq \cJ_{\alpha_2}\varsupsetneq\ldots\varsupsetneq  \cJ_{\alpha_i}\varsupsetneq\ldots
$$
In this setting,  the multiplicity of $c\in \RR_{>0}$ is 
$m(c) = \dim_{\KK} \left(\cJ_{c-\varepsilon} / \cJ_{c}\right),$ for $\varepsilon >0$ small enough, and the Poincar\'e series of $\cJ$ is $$
P_{\bf \cJ} (T)= \sum_{c\in \RR_{>0}} \dim_{\KK} \left(\cJ_{c-\varepsilon} / \cJ_{c}\right) \hskip 1mm T^{c}.
$$

\vskip 2mm

The aim of this section is to specialize the results we obtained in the previous section to the case of multiplier ideals and test ideals.

\subsection{Multiplier ideals}
Let $(A,\m)$ be a normal local ring containing an algebraically closed field $\KK$ of characteristic zero 
and $\fa\subseteq A$ an  ideal. Under these general assumptions  we ensure the existence of canonical divisors $K_X$ on $X=\Spec A$ which are not necessarily $\QQ$-Cartier. Then we may find some effective boundary divisor $\Delta$ such that $K_X + \Delta $ is $\QQ$-Cartier with index $m$ large enough. Now, given a
{\em log-resolution}  $\pi: X' \rightarrow X$ of the triple $(X,\Delta, \fa)$ we pick a canonical divisor $K_{X'}$ in $X'$ such that $\pi_{\ast} K_{X'}=K_X$ and let $F$ be an effective divisor such that $\fa\cdot\cO_{X'} = \cO_{X'}\left(-F\right)$. 

\vskip 2mm

The {\em multiplier ideal} associated to the triple $(X,\Delta, \fa^c)$ for some real number $c \in \RR_{>0}$ is defined as
$$\cJ(X,\Delta,\fa^{c}) = \pi_*\mathcal{O}_{X'}\left(\left\lceil K_{X'} - \frac{1}{m}\pi^{\ast}(m(K_X+\Delta)) - cF \right\rceil\right).$$
This construction allowed de Fernex and Hacon \cite{dFH} to  define the multiplier ideal $\cJ(\fa^{c})$  associated to $\fa$ and $c$ as the unique maximal element of the set of multiplier ideals $\cJ(X,\Delta,\fa^{c})$ where $\Delta$ varies among all the effective divisors such that $K_X + \Delta $ is $\QQ$-Cartier. The key point in their proof is the existence of such a divisor $\Delta$ that realizes the multiplier ideal as $\cJ(\fa^{c})=\cJ(X,\Delta,\fa^{c}) $. 
In this general framework we have that  the {\it local vanishing theorem}
still hold \cite[Theorem 4.1.19]{dFEM}. Namely, for any $c \in \RR_{>0}$ we have $$R^1\pi_*\mathcal{O}_{X'}\left(\left\lceil K_{X'} - \frac{1}{m}\pi^{\ast}(m(K_X+\Delta)) - cF \right\rceil\right)=0.$$

\vskip 2mm

\begin{remark}
If $A$ is $\QQ$-Gorenstein,  the canonical module $K_X$ is $\QQ$-Cartier so no boundary $\Delta$ is required in the definition of multiplier ideal. Namely we have
$$\cJ(\fa^{c}) = \pi_*\mathcal{O}_{X'}\left(\left\lceil K_{X'} - \frac{1}{m}\pi^{\ast}(m K_X) - cF \right\rceil\right).$$

\end{remark}

From its construction we have that the multiplier ideals form a  filtration
$$A\varsupsetneq \cJ(\fa^{\alpha_1})\varsupsetneq \cJ(\fa^{\alpha_2})\varsupsetneq...\varsupsetneq \cJ(\fa^{\alpha_i})\varsupsetneq...$$
and the  $\alpha_i$ where  we have a strict inclusion of ideals are the {\em jumping numbers} of the ideal $\fa$.  

\vskip 2mm
Assume in addition that $\fa$ is an $\m$-primary ideal and thus $F$ is a divisor with exceptional support. Then any multiplier ideal $\cJ(\fa^{c})$ is $\m$-primary as well. To ensure that $\cJ=\{\cJ(\fa^{c})\}_{c\geq 0}$ is an $\RR$-good $\fa$-filtration we notice the following:

\vskip 2mm

\begin{itemize}
 \item[$\cdot$] {\it Discreteness}:  If $\fa$ is $\m$-primary, the number of multiplier ideals in any interval $[c_1, c_2]$ is smaller or equal than $\dim_{\KK} \cJ(\fa^{c_1})/\cJ(\fa^{c_2})$.

\vskip 2mm 

 \item[$\cdot$] {\it Skoda's theorem \cite[Corollary 5.7]{dFH}}: For any  $c > \dim A$ we have $\cJ(\fa^{c}) = \fa \cdot  \cJ(\fa^{c-1}).$

\end{itemize}

\vskip 2mm

There are cases where the jumping numbers are not rational as shown by Urbinati  \cite{Urb12}.
Known cases where the jumping numbers form a discrete set of rational numbers and thus the filtration $\cJ=\{\cJ(\fa^{c})\}_{c\geq 0}$ is a $\QQ$-good $\fa$-filtration are:

\vskip 2mm

\begin{itemize}
\item[$\cdot$] $X$ is $\QQ$-Gorenstein.
\item[$\cdot$] The {\it symbolic Rees algebra} $\mathcal{R}(-(K_X+\Delta)):= \bigoplus_{n\geq 0} \cO_{X}(-n(K_X+\Delta))$ is finitely generated \cite[Remark 2.26]{CEMS}.
\end{itemize}

\vskip 2mm

\begin{theorem}\label{thm_mult_rational}
Let $(A,\m)$ be a normal local ring of dimension $d$ containing an algebraically closed field $\KK$ of characteristic zero, $\fa \subseteq A$ an $\m$-primary ideal and let $\cJ:=\{\cJ(\fa^{c})\}_{c\geq 0}$ be the filtration given by multiplier ideals.  Then, the Poincar\'e series $P_{\bf \cJ} (T)$ is rational. Indeed, we have 
 $$
P_{\bf \cJ} (T)= \sum_{c\in (0,1]} \left( \frac{m(c) }{1-T} + \frac {h_{\cJ(\fa^c)}(T) - h_{\cJ(\fa^{c-\varepsilon})}(T) }{(1-T)^{d+1}}\right)\hskip 1mm T^{c},
$$
where $h_{\cJ(\fa^c)}(T)$ is the $h$-polynomial associated to the multiplier ideal $\cJ(\fa^c)$. % and $d=\dim A$.
\end{theorem}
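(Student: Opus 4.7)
The strategy is essentially a reduction: since Theorem \ref{thm_rational} already delivers the rationality and the explicit formula for any $\RR$-good $\fa$-filtration, the entire content of Theorem \ref{thm_mult_rational} reduces to verifying that $\cJ=\{\cJ(\fa^{c})\}_{c\geq 0}$ is such a filtration (with $M=A$ and $d=\dim A$), and then translating the language of Definitions \ref{def-good}--\ref{def-Poincare} into the multiplier-ideal language.

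First I would record that $\cJ$ is a decreasing family of submodules of $A$; this is immediate from the definition of $\cJ(\fa^c)$ via the round-up, since increasing $c$ only makes the integral part $\lceil K_{X'}-\tfrac{1}{m}\pi^{\ast}(m(K_X+\Delta))-cF\rceil$ smaller. Next I would invoke the two bullets stated just before the theorem. Discreteness follows because, for an $\m$-primary ideal $\fa$, each $\cJ(\fa^{c})$ is $\m$-primary and hence of finite colength; the number of strict inclusions in $\cJ$ on any interval $[c_1,c_2]$ is bounded by $\dim_{\KK}\cJ(\fa^{c_1})/\cJ(\fa^{c_2})<\infty$. The Skoda stabilization required by Definition \ref{def-good}, namely $\cJ(\fa^{c+1})=\fa\cdot\cJ(\fa^{c})$ for all $c\gg 0$, is exactly de Fernex--Hacon's generalization of Skoda \cite[Corollary 5.7]{dFH}, valid for all $c>d$ in this normal setting.

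Once the hypothesis is checked, Theorem \ref{thm_rational} applies directly. Under the identification $M=A$, the length $\lambda$ over $A$ on modules of finite length agrees with $\dim_{\KK}(-)$ because $\KK$ is (algebraically closed and hence) the residue field $A/\m$; so the multiplicity from Definition \ref{def-mult} specializes to $m(c)=\dim_{\KK}\cJ(\fa^{c-\varepsilon})/\cJ(\fa^{c})$, and the Poincaré series from Definition \ref{def-Poincare} becomes $P_{\bf \cJ}(T)$. For each $c\in(0,1]$, the tail filtration $\cJ_c:\cJ(\fa^{c})\supseteq\cJ(\fa^{c+1})\supseteq\cdots$ is an ordinary good $\fa$-filtration of $A$, whose Hilbert--Samuel series has denominator $(1-T)^{d+1}$ since $A$ has Krull dimension $d$; its $h$-polynomial is what the theorem denotes $h_{\cJ(\fa^{c})}(T)$. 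Substituting into the formula from Theorem \ref{thm_rational} produces the displayed expression verbatim.

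There is really no major obstacle: the mathematical work was done in Section \ref{Sec2}, and the contents of the bullets preceding the statement exhaust what is needed from multiplier ideal theory. The only potential subtlety to flag is that $\KK$ being algebraically closed (and the hypothesis that $A$ contains $\KK$) ensures $A/\m\cong \KK$, so that the $\dim_{\KK}$ in the formula matches the $A$-length appearing in the general result; if one only knew $\KK \hookrightarrow A/\m$, one would have to insert a factor of $[A/\m:\KK]$, but this does not arise under the stated hypotheses.
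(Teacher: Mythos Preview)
Your proposal is correct and follows exactly the paper's approach: the paper's proof is the single line ``The result follows from Theorem \ref{thm_rational},'' relying on the two bullets (discreteness and Skoda) stated just before the theorem to check that $\cJ$ is an $\RR$-good $\fa$-filtration. Your elaboration is faithful to this; the one minor slip is the claim that $\KK$ algebraically closed alone forces $A/\m\cong\KK$ (it does not in general), but the paper itself tacitly identifies $\lambda(-)$ with $\dim_\KK(-)$ throughout Section~\ref{Sec3}, so this is an ambient convention rather than a defect in your argument.
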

%}

\begin{proof}
The result follows from Theorem \ref{thm_rational}.
\end{proof}

\vskip 2mm

When $A$ is the local ring at a rational singularity of a surface, Alberich-Carrami\~nana et al.  \cite[Theorem 4.1]{ACAMDCGA13} gave a  precise formula for the multiplicity $m(c)$ of any given $c \in \RR_{>0}$, and consequently an explicit description of the Poincar\'e series. We may follow the same approach to get a  partial  extension of their formula.

\begin{definition} \label{jumping_divisor}
Let $(X, \Delta, \fa^c)$ be a triple.
The {\it maximal jumping divisor} associated to $c\in \RR_{>0}$ is 
$$H_c = \left\lceil K_{X'} - \frac{1}{m}\pi^{\ast}(m(K_X+\Delta)) - (c-\varepsilon)F \right\rceil-\left\lceil K_{X'} - \frac{1}{m}\pi^{\ast}(m(K_X+\Delta)) - cF \right\rceil$$ where $\varepsilon$ is  small enough. 
\end{definition}

\begin{remark}
Denote $K_{X'} - \frac{1}{m}\pi^{\ast}(m(K_X+\Delta)) = \sum_i k_i E_i$ and $F=\sum_ i e_i E_i$, where the $E_i$'s are the exceptional components of $\pi$. Then $H_c$ can be defined as the reduced divisor whose
components are the $E_i$ such that $k_i - ce_i \in \ZZ$. In particular we have $H_c=H_{c+1}$ for all $c \in \RR_{>0}$.
\end{remark}

\begin{proposition} \label{multiplicity_formula}
Let $(X, \Delta, \fa^c)$ be a triple. Then, the multiplicity of $c\in \RR_{>0}$ is 
$$m\left(c\right)  = h^0\left(H_c,\cO_{H_c}\left(\left\lceil K_{X'} - \frac{1}{m}\pi^{\ast}(m(K_X+\Delta)) - cF \right\rceil+H_c\right)\right)$$
\end{proposition}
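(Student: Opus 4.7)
The plan is to derive the formula from the structure sheaf sequence of the closed embedding $H_c \hookrightarrow X'$, push it forward along $\pi$, and use the local vanishing theorem recalled earlier in the section to turn it into a short exact sequence whose middle quotient is precisely $\cJ(\fa^{c-\varepsilon})/\cJ(\fa^c)$.

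First I would fix the notation $D_c := \lceil K_{X'} - \frac{1}{m}\pi^{\ast}(m(K_X+\Delta)) - cF \rceil$, and similarly $D_{c-\varepsilon}$ for sufficiently small $\varepsilon > 0$, so that $\cJ(\fa^c) = \pi_*\cO_{X'}(D_c)$ and $\cJ(\fa^{c-\varepsilon}) = \pi_*\cO_{X'}(D_{c-\varepsilon})$. By the very definition of the maximal jumping divisor we have $D_{c-\varepsilon} = D_c + H_c$, and by the Remark right after Definition \ref{jumping_divisor} the divisor $H_c$ is reduced and effective. Starting from the ideal sheaf sequence
\begin{equation*}
0 \to \cO_{X'}(-H_c) \to \cO_{X'} \to \cO_{H_c} \to 0
\end{equation*}
and twisting by the invertible sheaf $\cO_{X'}(D_c + H_c)$ produces
\begin{equation*}
0 \to \cO_{X'}(D_c) \to \cO_{X'}(D_c + H_c) \to \cO_{H_c}(D_c + H_c) \to 0.
\end{equation*}

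Next I would apply $\pi_*$. The connecting homomorphism takes values in $R^1\pi_*\cO_{X'}(D_c)$, which vanishes by the local vanishing theorem stated above, so the pushforward sequence remains exact:
\begin{equation*}
0 \to \cJ(\fa^c) \to \cJ(\fa^{c-\varepsilon}) \to \pi_*\cO_{H_c}(D_c + H_c) \to 0.
\end{equation*}
Since $\fa$ is $\m$-primary, both $F$ and the relative canonical divisor have exceptional support, hence so does $H_c$; in particular $\pi(H_c) = \{\m\}$ and $\pi_*\cO_{H_c}(D_c + H_c)$ is a skyscraper sheaf at the closed point whose $\KK$-dimension coincides with $h^0(H_c, \cO_{H_c}(D_c + H_c))$. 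Combining this with the defining equality $m(c) = \dim_{\KK}\cJ(\fa^{c-\varepsilon})/\cJ(\fa^c)$ yields the desired formula.

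The main step that needs care is the application of local vanishing to $D_c$: one must make sure the divisor $D_c$ is of the exact shape to which \cite[Theorem 4.1.19]{dFEM} applies, and that the $\QQ$-boundary $\Delta$ chosen in the definition of the multiplier ideal via de Fernex--Hacon can be taken uniformly for $c$ and $c-\varepsilon$ so that both $D_c$ and $D_{c-\varepsilon}$ are computed on the same log-resolution. Once those points are in place, the rest reduces to a routine cohomological chase.
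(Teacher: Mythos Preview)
Your proposal is correct and is essentially the same argument as the paper's: both write the short exact sequence obtained by twisting the ideal sheaf sequence of $H_c$ by $\cO_{X'}(D_c+H_c)$, push forward, and invoke local vanishing to identify $\cJ(\fa^{c-\varepsilon})/\cJ(\fa^c)$ with the skyscraper $\pi_*\cO_{H_c}(D_c+H_c)$. You spell out slightly more than the paper does (the derivation of the sequence from the ideal sheaf sequence, and why the pushforward is a skyscraper at $\m$), but the route is identical.
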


\begin{proof}
To avoid heavy notation, let $K_\pi:= K_{X'} - \frac{1}{m}\pi^{\ast}(m(K_X+\Delta))$.
Consider the short exact sequence
$$0 \longrightarrow \cO_{X'}\left(\left\lceil K_\pi-cF\right\rceil\right) \longrightarrow \cO_{X'}\left(\left\lceil K_\pi-cF\right\rceil + H_c\right) \longrightarrow \cO_{H_c}\left(\left\lceil K_\pi-cF\right\rceil+H_c\right) \longrightarrow 0$$
Pushing it forward to $X$ and applying local vanishing for multiplier ideals we get the
short exact sequence
\begin{multline*}
0 \longrightarrow \pi_*\cO_{X'}\left(\left\lceil K_\pi-cF\right\rceil\right) \longrightarrow \pi_*\cO_{X'}\left(\left\lceil K_\pi-cF\right\rceil+H_c\right) \longrightarrow \\
\longrightarrow H^0\left(H_c,\cO_{H_c}\left(\left\lceil
K_\pi-cF\right\rceil+H_c\right)\right) \otimes \KK_O \longrightarrow
0
\end{multline*}
or equivalently
$$0 \longrightarrow\cJ(\fa^c) \longrightarrow\cJ(\fa^{\left(c-\varepsilon\right)}) \longrightarrow H^0\left(H_c,\cO_{H_c}\left(\left\lceil K_\pi-cF\right\rceil+H_c\right)\right) \otimes \KK_O \longrightarrow 0$$
Therefore the multiplicity of $c$ is just $ m\left(c\right)  = h^0\left(H_c,\cO_{H_c}\left(\left\lceil K_\pi-cF\right\rceil+H_c\right)\right)$.
\end{proof}

\begin{pregunta} \label{question1}
The key ingredient for the explicit formula of the Poincar\'e series of multiplier ideals in dimension $2$ given by  Alberich-Carrami\~nana et al. \cite{ACAMDCGA13} is that the multiplicities satisfy  $m\left(c+k\right) -m\left(c\right)= k \rho_c$, where  $\rho_c:=-F\cdot H_c$ are the {\it excesses} associated to the maximal jumping divisor $H_c$.
Pande \cite{Pan} proved that $m\left(c+j\right)$ is a polynomial function in $j$ of degree less than $d$ in the case of smooth varieties in arbitrary dimension $d$.
These results  motivate the following question regarding multiplicities for $\m$-primary ideals in 
normal rings.
 Is there a polynomial expression in terms of $j$ for
$$
m\left(c+j\right) -m\left(c\right) = h^0\left(H_c,\cO_{H_c}\left(\left\lceil K_\pi - cF \right\rceil+ H_c+ j F\right)\right)   - h^0\left(H_c,\cO_{H_c}\left(\left\lceil K_\pi - cF \right\rceil+H_c\right)\right)?
$$
\end{pregunta}

\subsection{Test ideals}

Let $A$ be a commutative Noetherian ring containing a field $\KK$ of characteristic $p>0$. The theory of test ideals has its origins in the work of Hochster and Huneke on tight closure \cite{HH}. In the case of $A$ being a regular ring, Hara and Yoshida \cite{HY} extended the notion of test ideals to pairs $(A,\fa^c)$ where $\fa\subseteq A$ is an ideal. Their construction has been generalized in subsequent works \cite{BMS, BMS2, TT, BSTZ, Sch11,Bli}
using the theory of {\it Cartier operators}.

\vskip 2mm

Assume that $A$ is $F$-finite. Then, the {\it test ideal}  $\tau(\fa^c)$ associated to $\fa$ and  some real number $c \in \RR_{\geq 0}$ is the smallest nonzero ideal which is compatible with any Cartier operator $\phi \in \bigoplus_{e\geq 0} {\rm Hom}_A(F_{\ast}^eA,A) \cdot F_{\ast}^e \fa^{\lceil cp^e\rceil}$, where $F_{\ast}^e$ is the Frobenius functor.  In this situation we also have  a  filtration
$$A\varsupsetneq \tau(\fa^{\alpha_1})\varsupsetneq \tau(\fa^{\alpha_2})\varsupsetneq...\varsupsetneq \tau(\fa^{\alpha_i})\varsupsetneq...$$
and the  $\alpha_i$ where  we have a strict inclusion of ideals  are called the {\em $F$-jumping numbers} of the ideal $\fa$.

\vskip 2mm

We now give a sufficient condition to have that $\tau(\fa^c)$ is $\m$-primary.

\begin{lemma}
Let $(A, \m)$ be a local $F$-finite Noetherian ring containing a field $\KK$ of characteristic $p>0$ and let $\fa \subseteq A$ be an $\m$-primary ideal. Assume that $A_{\mathfrak{p}}$ is a strongly $F$-regular ring for all prime ideals $\mathfrak{p} \neq \m$. Then,  the test ideals $\tau(\fa^c)$ are $\m$-primary or $A$.
\end{lemma}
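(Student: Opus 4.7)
The plan is to show that $\Supp(A/\tau(\fa^c)) \subseteq \{\m\}$, which is exactly the statement that $\tau(\fa^c)$ is either $\m$-primary or the unit ideal $A$. To this end I will check that the localization $\tau(\fa^c)_{\mathfrak{p}}$ equals $A_{\mathfrak{p}}$ for every prime $\mathfrak{p} \neq \m$.

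The first ingredient is the compatibility of test ideals with localization in the $F$-finite setting: for any prime $\mathfrak{p}$ of $A$ one has
\[
\tau(\fa^c)_{\mathfrak{p}} \;=\; \tau_{A_{\mathfrak{p}}}\bigl((\fa A_{\mathfrak{p}})^c\bigr).
\]
This is standard in the generality we need (e.g.\ from the Cartier-operator description used in \cite{BMS, Sch11, Bli}). The second ingredient is that in a strongly $F$-regular ring the test ideal of the unit ideal is the whole ring: $\tau_{A_{\mathfrak{p}}}(A_{\mathfrak{p}}^c) = A_{\mathfrak{p}}$ whenever $A_{\mathfrak{p}}$ is strongly $F$-regular, which is one of the defining features of strong $F$-regularity.

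Now fix a prime $\mathfrak{p} \neq \m$. Because $\fa$ is $\m$-primary we have $\sqrt{\fa} = \m$, so $\fa \subseteq \mathfrak{p}$ would force $\m = \sqrt{\fa} \subseteq \mathfrak{p}$, and hence $\mathfrak{p} = \m$, contradicting our choice. Therefore $\fa \not\subseteq \mathfrak{p}$, which means $\fa A_{\mathfrak{p}} = A_{\mathfrak{p}}$. Combining this with the two ingredients above,
\[
\tau(\fa^c)_{\mathfrak{p}} \;=\; \tau_{A_{\mathfrak{p}}}\bigl((\fa A_{\mathfrak{p}})^c\bigr) \;=\; \tau_{A_{\mathfrak{p}}}(A_{\mathfrak{p}}^c) \;=\; A_{\mathfrak{p}},
\]
where the last equality uses that $A_{\mathfrak{p}}$ is strongly $F$-regular by hypothesis.

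Since $\tau(\fa^c)_{\mathfrak{p}} = A_{\mathfrak{p}}$ for every prime $\mathfrak{p} \neq \m$, the support of $A/\tau(\fa^c)$ is contained in $\{\m\}$. If this support is empty then $\tau(\fa^c) = A$; otherwise the only associated prime is $\m$ and $\tau(\fa^c)$ is $\m$-primary. There is no real obstacle here beyond quoting the localization property of test ideals correctly; the hypothesis that $A$ is strongly $F$-regular on the punctured spectrum is used exactly once, precisely to kill the stalks away from $\m$.
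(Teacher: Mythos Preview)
Your argument is correct and follows essentially the same route as the paper: localize the test ideal, use that $\fa A_{\mathfrak{p}} = A_{\mathfrak{p}}$ for $\mathfrak{p}\neq\m$, and invoke strong $F$-regularity of $A_{\mathfrak{p}}$ to conclude $\tau(\fa^c)_{\mathfrak{p}} = A_{\mathfrak{p}}$. The paper's version is slightly terser (it just writes the chain $\tau(\fa^c)_{\mathfrak{p}} = \tau(\fa_{\mathfrak{p}}^c) = \tau(A_{\mathfrak{p}}^c) = \tau(A_{\mathfrak{p}}) = A_{\mathfrak{p}}$ and concludes $\operatorname{rad}(\tau(\fa^c))\supseteq\m$), but the content is the same.
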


\begin{proof}
Since test ideals localize \cite[Proposition 3.2]{Bli}, we have that  $\tau(\fa^c)_{\mathfrak{p} } = \tau(\fa_{\mathfrak{p} }^c) = \tau(A_{\mathfrak{p} }^c) = \tau(A_{\mathfrak{p} }) = A_{\mathfrak{p} } $ for all prime ideals $\mathfrak{p} \neq \m$, because  $A_{\mathfrak{p}}$ is  strongly $F$-regular. Therefore ${\rm rad}(\tau(\fa^c)) \supseteq \m$.
\end{proof}

Under these extra assumptions we have that $\tau=\{\tau(\fa^{c})\}_{c\geq 0}$ is an $\RR$-good $\fa$-filtration:

\vskip 2mm

\begin{itemize}
 \item[$\cdot$] {\it Discreteness}:    If $\fa$ is $\m$-primary and $A$ is strongly $F$-regular in the punctured spectrum,  the number of test ideals in any interval $[c_1, c_2]$ is smaller or equal than $\dim_{\KK} \tau(\fa^{c_1})/\tau(\fa^{c_2})$.

\vskip 2mm 

 \item[$\cdot$] {\it Skoda's theorem \cite{Bli, HT,ST12}}: For any  $c > \dim A$ we have $\tau(\fa^{c}) = \fa \cdot  \tau(\fa^{c-1}).$

\end{itemize}

\vskip 2mm

Known cases where the $F$-jumping numbers form a discrete set of rational numbers and thus the filtration $\tau=\{\tau(\fa^{c})\}_{c\geq 0}$ is a $\QQ$-good $\fa$-filtration are:

\vskip 2mm

\begin{itemize}
\item[$\cdot$] $(A,\m)$ is an $F$-finite,  normal $\QQ$-Gorenstein  local domain \cite{BMS, TT, KLZ, BSTZ, Sch211, ST12}.
\item[$\cdot$] $A$ is an $F$-finite ring which is a direct summand of a regular ring \cite{AHN}.
\end{itemize}

\vskip 2mm

\begin{theorem}\label{thm_tau_rational}
Let $(A,\m)$ be an $F$-finite local ring of dimension $d$ containing a field $\KK$ of characteristic $p>0$ and let $\fa$ be an $\m$-primary ideal. Assume that $A_{\mathfrak{p}}$ is a strongly $F$-regular ring for all prime ideals $\mathfrak{p} \neq \m$.
 Let $\tau =\{\tau(\fa^{c})\}_{c\geq 0}$  be the filtration given by test ideals. 
%Assume that the set of $F$-jumping numbers is discrete. 
Then, the Poincar\'e series $P_{\bf \tau} (T)$ is rational. Indeed, we have 
 $$
P_{\bf \tau} (T)= \sum_{c\in (0,1]} \left( \frac{m(c) }{1-T} + \frac {h_{\tau(\fa^c)}(T) - h_{\tau(\fa^{c-\varepsilon})}(T) }{(1-T)^{d+1}}\right)\hskip 1mm T^{c}
$$
where $h_{\tau(\fa^c)}(T)$ is the $h$-polynomial associated to the test ideal $\tau(\fa^c)$. % and $d=\dim A$.
\end{theorem}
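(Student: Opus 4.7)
The plan is to argue that, under the stated hypotheses on $A$ and $\fa$, the filtration $\tau = \{\tau(\fa^{c})\}_{c \geq 0}$ is an $\RR$-good $\fa$-filtration in the sense of Definition \ref{def-good}, so that Theorem \ref{thm_rational} applies directly and yields both the rationality and the closed form of $P_{\bf \tau}(T)$. This mirrors the proof of Theorem \ref{thm_mult_rational}, where the only work was to package the geometric/algebraic properties of multiplier ideals as an $\RR$-good $\fa$-filtration.

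First I would verify the three ingredients needed for $\tau$ to be $\RR$-good. The preceding lemma shows that, since $A_{\mathfrak{p}}$ is strongly $F$-regular for every $\mathfrak{p}\neq \m$, each test ideal $\tau(\fa^c)$ is either $\m$-primary or all of $A$; in particular $\lambda(A/\tau(\fa^c))<\infty$, so the filtration consists of submodules of finite colength in $A$. Second, the discreteness bullet recalled just before the statement guarantees that for any bounded interval $[c_1,c_2]$ the number of $F$-jumping numbers is at most $\dim_{\KK}\tau(\fa^{c_1})/\tau(\fa^{c_2})<\infty$, so the indexing set of strict inclusions is discrete in $\RR_{>0}$, exactly as required by Definition \ref{def-good}. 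Third, Skoda's theorem for test ideals (the second bullet) gives $\tau(\fa^{c})=\fa\cdot \tau(\fa^{c-1})$ for every $c>\dim A=d$, which is the stabilization condition $M_{\alpha+1}=\fa M_{\alpha}$ for $\alpha\gg 0$ in Definition \ref{def-good}.

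Having checked these three items, the filtration $\tau$ falls under the hypotheses of Theorem \ref{thm_rational} with $M=A$ and ambient ring $A$. Applying that theorem immediately yields
\[
P_{\bf \tau} (T)= \sum_{c\in (0,1]} \left( \frac{m(c) }{1-T} + \frac {h_{\tau(\fa^c)}(T) - h_{\tau(\fa^{c-\varepsilon})}(T) }{(1-T)^{d+1}}\right)\hskip 1mm T^{c},
\]
where $h_{\tau(\fa^c)}(T)$ is the $h$-polynomial of the (ordinary) good $\fa$-filtration $\tau_{c}: \tau(\fa^c)\supseteq \tau(\fa^{c+1})\supseteq \cdots$ whose existence and rationality of Hilbert series underlie the proof of Theorem \ref{thm_rational}. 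This gives the claimed rational expression, and the rationality of $P_{\bf \tau}(T)$ as an element of $\QQ(T^{\alpha_1},\dots,T^{\alpha_s})$ (or of $\QQ(T^{1/e})$ in the $\QQ$-good case) is automatic from Theorem \ref{thm_rational}.

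There is no real obstacle here beyond bookkeeping: the entire content of the theorem is that the standard list of properties of test ideals, namely $\m$-primaryness on the punctured spectrum, discreteness of $F$-jumping numbers, and Skoda's theorem, are precisely the axioms that Definition \ref{def-good} distills. The only point one must be slightly careful about is that the filtration starts at $A$ rather than at an $\m$-primary ideal, which is absorbed by taking $M=A$ in the general framework of Section \ref{Sec2}.
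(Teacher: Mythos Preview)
Your proposal is correct and follows exactly the paper's approach: the paper's proof is the single sentence ``The result follows from Theorem \ref{thm_rational},'' relying on the preceding lemma and the two bullet points (discreteness and Skoda) to ensure that $\tau$ is an $\RR$-good $\fa$-filtration. You have simply made that verification explicit.
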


\begin{proof}
%Under these assumptions on $A$ we have that  $\tau =\{\tau(\fa^{c})\}_{c \geq 0}$ is a $\RR$-good $\fa$-filtration of $\m$-primary ideals and 
The result follows from Theorem \ref{thm_rational}.
\end{proof}

Motivated by the case of multiplier ideals \cite{GM10,ACAMDCGA13,Pan}, we would like to have a precise description of the multiplicities of $F$-jumping numbers since it would yield a more explicit formula for the Poincar\'e series. More precisely we ask the following

\begin{pregunta} \label{question2}
 Is the multiplicity of test ideals of $\m$-primary ideals in a strongly $F$-regular ring, $m\left(c+j\right)$, 
a polynomial function in $j$ of degree  less than $d$ ?
\end{pregunta}

%%%%%%%%%%%%%%%%%%%%%%%%%%5
\section{Poincar\'e series in Cohen-Macaulay rings} \label{Sec4}

Let $(A,\m,\KK)$ be a Cohen-Macaulay local ring of dimension $d$. Let $\fa$ be  an $\m$-primary ideal generated by a regular sequence 
$f_1,\dots, f_d$. Let $\cJ=\{\cJ_{c}\}_{c\geq 0}$ be an $\RR$-good $\fa$-filtration of $\m$-primary ideals satisfying Skoda's theorem so $\cJ_{c}=\fa \cJ_{c-1}$ for all $c>d$. The Poincar\'e series of $\cJ$ is 
$$
P_{\bf \cJ} (T)= \sum_{c\in \RR_{>0}} m(c) \hskip 1mm T^{c} = \sum_{c\in (0,1]} \left(\sum_{j \geq 0 } m(c+j) T^j \right)\hskip 1mm T^{c}
$$ 
and zooming in the summands we have
$$
\sum_{j \geq 0 } m(c+j) T^j = m(c)+m(c+1) T+ \cdots + m(c+d-2) T^{d-2} + T^{d-1}\sum_{j \geq 0 } \lambda(\fa^j \cJ_{c+d-1-\varepsilon}/ \fa^j \cJ_{c+d-1}) T^j $$

The aim of this section is to work towards finding a more explicit formula for the Poincar\'e series in Cohen-Macaulay rings, especially in the case that $\cJ$ is a filtration of multiplier or test ideals where we require that $\KK$ is an infinite field. Namely, 
let $(A, \m)$ be a  local Noetherian ring  containing an infinite field $\KK$ and let $\fa$ be  any $\m$-primary ideal.  Every minimal reduction of $\fa$ can be generated by a superficial sequence of length equal to the analytical spread of $\fa$ \cite[Theorem 8.6.3]{HS}. Since $\fa$ is $\m$-primary, $\ell(\fa)=\dim(A)$. If $A$ is Cohen-Macaulay this superficial sequence is indeed a regular sequence. Therefore we have  $\overline{\fa}=\overline{(f_1,\dots, f_d)}$, where $\overline{(\cdot)}$ denotes the integral closure. Multiplier ideals and test ideal are invariant up to integral closure so we may assume that $\fa$ is generated by a regular sequence.

\vskip 2mm

\begin{notation} \label{notation}
Let $(A,\m,\KK)$ be a Cohen-Macaulay local ring of dimension $d$. Let
 $J\subseteq A$ be an $\m$-primary ideal and $\fa=(f_1, \cdots ,f_d)$ a parameter ideal. 
  Consider a free resolution %of the powers $fa^j$, $j\geq 1$
\begin{equation}
\xymatrix{
\cdots \ar[r] &
A^{\beta_2} \ar[r] &
A^{\beta_1} \ar[r] &
A \ar[r] &
A/\fa^j  \ar[r] &
0},
\end{equation}
where $\beta_1={{j+(d-1)}\choose{d-1}}$ is the number of generators of $\fa^j$. After tensoring with $A/J$, we get
\begin{equation}\label{complex}
\xymatrix{
\cdots \ar[r] &
(A/J)^{\beta_2} \ar[r]^{\phi_j^J} &
(A/J)^{\beta_1} \ar[r]^{\varphi_j^J} &
(A/J) \ar[r] &
A/(\fa^j+J)  \ar[r] &
0},
\end{equation}
 The morphisms $\varphi_j^J$ and $\phi_j^J$ plays a role  in what follows. If the ideal $J$ is clear from the context we simply denote $\varphi_j$ and $\phi_j$. Notice also that $\phi_j=0$ for $j=0$.
\end{notation}

\begin{lemma} \label{lemma1}
Let $(A,\m,\KK)$ be a Cohen-Macaulay local ring of dimension $d$. Let
 $J\subseteq A$ be an $\m$-primary ideal and $\fa=(f_1, \cdots ,f_d)$ a parameter ideal. 
 Then, for  every $j\in\ZZ_{>0}$ we have
  $$
 \lambda(J/\fa^j J)=   \lambda(A/\fa^j)-\lambda( \IM \phi_j)+(\beta_1-1)\lambda(A/J)  
 $$
where $\beta_1={{j+(d-1)}\choose{d-1}}$.
\end{lemma}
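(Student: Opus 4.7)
The plan is to compute $\lambda(J/\fa^j J)$ via the Tor long exact sequence, using the complex \eqref{complex} as a bookkeeping device for $\Tor_1^A(A/\fa^j, A/J)$.

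First, I would apply $A/\fa^j \otimes_A -$ to the short exact sequence $0 \to J \to A \to A/J \to 0$. Since $A$ is free, $\Tor_1^A(A/\fa^j, A) = 0$, and the long exact sequence collapses to the four-term exact sequence
$$0 \to \Tor_1^A(A/\fa^j, A/J) \to J/\fa^j J \to A/\fa^j \to A/(\fa^j + J) \to 0.$$
The complex \eqref{complex} is precisely the free resolution of $A/\fa^j$ tensored with $A/J$, so its first homology recovers $\Tor_1^A(A/\fa^j, A/J) = \ker \varphi_j / \IM \phi_j$.

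Before taking lengths I would verify finite length of every term. The quotients $A/\fa^j$ and $A/(\fa^j+J)$ are $\m$-primary since $\fa$ is. For $J/\fa^j J$, the $\m$-primarity of $\fa^j$ gives $\m^N \subseteq \fa^j$ for some $N$, hence $\m^N (J/\fa^j J) = 0$, and finite generation of $J$ then forces $\lambda(J/\fa^j J) < \infty$. The Tor module is a subquotient of $(A/J)^{\beta_1}$, clearly of finite length. Thus lengths behave additively across the four-term sequence.

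The alternating sum gives
$$\lambda(J/\fa^j J) = \lambda(\Tor_1^A(A/\fa^j, A/J)) + \lambda(A/\fa^j) - \lambda(A/(\fa^j + J)),$$
so it remains to substitute $\lambda(\Tor_1^A(A/\fa^j, A/J)) = \lambda(\ker \varphi_j) - \lambda(\IM \phi_j)$ together with $\lambda(\ker \varphi_j) = \beta_1 \lambda(A/J) - \lambda(\IM \varphi_j)$ and $\lambda(A/(\fa^j+J)) = \lambda(\coker \varphi_j) = \lambda(A/J) - \lambda(\IM \varphi_j)$. The $\lambda(\IM \varphi_j)$ contributions cancel and the claimed formula drops out. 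There is no serious obstacle: the Cohen-Macaulay hypothesis enters only through the fact that $f_1, \ldots, f_d$ is a regular sequence, which ensures that $\beta_1 = \binom{j+d-1}{d-1}$ really is the number of generators of $\fa^j$ featured in \eqref{complex}; the rest is standard bookkeeping with the Tor long exact sequence.
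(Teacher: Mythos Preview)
Your proof is correct and follows essentially the same approach as the paper: both derive the four-term exact sequence from $0\to J\to A\to A/J\to 0$, identify $\Tor_1^A(A/\fa^j,A/J)=\ker\varphi_j/\IM\phi_j$, and then take lengths using the rank-nullity identity $\lambda(\ker\varphi_j)+\lambda(\IM\varphi_j)=\beta_1\lambda(A/J)$. Your added verification of finite lengths and your remark on where the Cohen--Macaulay hypothesis enters are welcome details, but the argument is otherwise identical to the paper's.
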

\begin{proof}
From the short exact sequence,
$ 0\to J\to A\to A/J\to 0,$
we have the induced long exact sequence 
$$
0\to \Tor_1^A(A/\fa^j, A/J)\to J/\fa^j J\to A/\fa^j\to A/(\fa^j+J)\to 0.
$$

Following Notation \ref{notation} we have  $\Tor_1^A(A/\fa^j, A/J)= \ker \varphi_j /\IM \phi_j $ and 
$A/(\fa^j+J)= (A/J) /  \IM \varphi_j$. Then,
\begin{align*}
\lambda(J/\fa^jJ)&=\lambda(A/\fa^j)+\lambda( \Tor_1^A(A/\fa^j,A/J))-\lambda(A/(\fa^j+J) )\\
&=\lambda(A/\fa^j)+[\lambda( \Ker \varphi_j)-\lambda( \IM \phi_j)]-[\lambda(A/J)-\lambda(\IM \varphi_j)]\\
&=\lambda(A/\fa^j)-\lambda( \IM \phi_j )-\lambda(A/J)+[\lambda( \Ker \varphi_j)+\lambda(\IM \varphi_j)]\\
&=\lambda(A/\fa^j)-\lambda( \IM \phi_j)-\lambda(A/J)+\lambda((A/J)^{\beta_1})\\
&=\lambda(A/\fa^j)-\lambda( \IM \phi_j)-\lambda(A/J)+\beta_1\lambda(A/J)\\
&=\lambda(A/\fa^j)-\lambda( \IM \phi_j)+(\beta_1-1)\lambda(A/J)\\
\end{align*}
\end{proof}

\begin{lemma} \label{lemma2}
Let $(A,\m,\KK)$ be a Cohen-Macaulay local ring of dimension $d$. Let
 $J \subseteq K\subseteq A$ be  $\m$-primary ideals and $\fa=(f_1, \cdots ,f_d)$ a parameter ideal. 
 Then, 
$$
\sum_{j \geq 0} \lambda(\fa^jK/\fa^j J)T^j= 
\frac{\lambda(K/J)}{(1-T)^d}+ \sum_{j\geq 1} [\lambda( \IM \phi_j^K)- \lambda( \IM \phi_j^J) ] T^j.
$$ 
%where $\beta_1={{j+(n-1)}\choose{n-1}}$.
\end{lemma}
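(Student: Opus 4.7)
\medskip

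\noindent\textbf{Proof plan for Lemma \ref{lemma2}.} The strategy is to express $\lambda(\fa^j K/\fa^j J)$ in closed form for each $j$ by exploiting the inclusions $\fa^j J \subseteq \fa^j K \subseteq K$ together with Lemma~\ref{lemma1} applied to both $J$ and $K$, and then sum the resulting series using the standard generating function identity $\sum_{j\geq 0}\binom{j+d-1}{d-1}T^j = (1-T)^{-d}$.

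First, I would consider the short exact sequence
$$0 \to \fa^j K/\fa^j J \to K/\fa^j J \to K/\fa^j K \to 0,$$
which gives
$$\lambda(\fa^j K/\fa^j J) = \lambda(K/\fa^j J) - \lambda(K/\fa^j K) = \lambda(K/J) + \lambda(J/\fa^j J) - \lambda(K/\fa^j K),$$
where the second equality uses the filtration $\fa^j J \subseteq J \subseteq K$.

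Next, apply Lemma \ref{lemma1} to $J$ and to $K$ (both are $\m$-primary). With $\beta_1 = \binom{j+d-1}{d-1}$ in both cases, subtracting the two identities yields
$$\lambda(J/\fa^j J) - \lambda(K/\fa^j K) = \lambda(\IM\phi_j^K) - \lambda(\IM\phi_j^J) + (\beta_1-1)\lambda(K/J),$$
since the $\lambda(A/\fa^j)$ terms cancel and $\lambda(A/J)-\lambda(A/K)=\lambda(K/J)$. Combining with the previous step,
$$\lambda(\fa^j K/\fa^j J) = \binom{j+d-1}{d-1}\lambda(K/J) + \lambda(\IM\phi_j^K) - \lambda(\IM\phi_j^J).$$
Note that this formula also holds for $j=0$: then $\beta_1=1$, $\phi_0^J=\phi_0^K=0$, and both sides equal $\lambda(K/J)$.

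Finally, I would multiply by $T^j$ and sum over $j\geq 0$. The $\lambda(K/J)$ term contributes
$$\lambda(K/J)\sum_{j\geq 0}\binom{j+d-1}{d-1}T^j = \frac{\lambda(K/J)}{(1-T)^d},$$
while the remaining terms (vanishing for $j=0$) give exactly $\sum_{j\geq 1}[\lambda(\IM\phi_j^K)-\lambda(\IM\phi_j^J)]T^j$, yielding the claimed identity. The only subtlety to verify is the correct bookkeeping at $j=0$ so that the lower summation bound in the second sum is $j\geq 1$; apart from this, the argument is a direct consequence of Lemma \ref{lemma1} and does not present a substantive obstacle.
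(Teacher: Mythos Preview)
Your proof is correct and follows essentially the same approach as the paper: both use the chain $\fa^j J \subseteq \fa^j K \subseteq K$ together with $\fa^j J \subseteq J \subseteq K$ to reduce to $\lambda(K/J)+\lambda(J/\fa^j J)-\lambda(K/\fa^j K)$, apply Lemma~\ref{lemma1} to $J$ and $K$, and sum via $\sum_{j\geq 0}\binom{j+d-1}{d-1}T^j=(1-T)^{-d}$. Your explicit check of the $j=0$ case is a helpful addition but not a substantive difference.
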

\begin{proof}
From the short exact sequences 
$$0\to \fa^jK/\fa^j J\to K/\fa^j J\to K/\fa^j K\to 0 \quad
,
\quad
0\to J/\fa^j J\to K/\fa^j J\to K/J\to 0$$
we get $\lambda(\fa^jK/\fa^j J)= \lambda(K/J) + \lambda(J/\fa^j J) - \lambda(K/\fa^j K)$. Thus, applying Lemma \ref{lemma1} to the ideals $J$ and $K$, we get
\begin{align*}
\lambda(\fa^jK/\fa^j J)&=\lambda(K/J)+ [\lambda(A/\fa^j)-\lambda( \IM \phi_j^J)+(\beta_1-1)\lambda(A/J)]\\
& -[\lambda(A/\fa^j)-\lambda( \IM \phi_j^K)+(\beta_1-1)\lambda(A/K)]\\
&=\lambda(K/J)+(\beta_1-1)(\lambda(A/J)-\lambda(A/K))+[\lambda( \IM \phi_j^K)- \lambda( \IM \phi_j^J) ]\\
&= \beta_1\lambda(K/J)+[\lambda( \IM \phi_j^K)- \lambda( \IM \phi_j^J) ],\\
\end{align*} where $\beta_1={{j+(d-1)}\choose{d-1}}$. 
Then the result follows since $\sum_{j\geq 0}{{j+(d-1)}\choose{d-1}} T^j= \frac{1}{(1-T)^d}$.
\end{proof}

In order to get some control on $\lambda( \IM \phi_j)$ we use the following result of Kodiyalam \cite[Theorem 2]{Kod} in the form that we need in the present work.

\begin{proposition}\label{Kod1}
Let $(A,\m,\KK)$ be a  local ring of dimension $d$ and let $\fa, J$ be $\m$-primary ideals. Then, for all $i\geq 0$, the function $\lambda ( \textrm{Tor}_i^A(A/\fa^j, A/J))$ is a polynomial of degree $d-1$ for $j\gg 0$ large enough.
\end{proposition}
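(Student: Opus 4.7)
The plan is to recognize this as a form of Kodiyalam's theorem on the asymptotic behavior of $\Tor$ against powers of an ideal, and to outline the standard strategy: package the family $\{\Tor_i^A(A/\fa^j, A/J)\}_{j}$ as graded components of a single finitely generated graded module over the Rees algebra $R := R(\fa) = \bigoplus_{n \geq 0} \fa^n$, and then apply the Hilbert polynomial theorem to that module.

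First, I would fix a free resolution $F_\bullet \to A/J$ with $F_k = A^{\beta_k}$, so that $\Tor_i^A(A/\fa^j, A/J) = H_i(F_\bullet \otimes_A A/\fa^j)$. From the short exact sequence $0 \to \fa^j \to A \to A/\fa^j \to 0$ and the associated long exact sequence of $\Tor$, the case $i \geq 2$ reduces to studying $\Tor_{i-1}^A(\fa^j, A/J)$, while the cases $i = 0, 1$ are handled directly: $\Tor_0^A(A/\fa^j, A/J) = A/(\fa^j + J)$ stabilizes to $A/J$ once $\fa^j \subseteq J$, and $\Tor_1^A(A/\fa^j, A/J)$ sits in a short exact sequence identifying its length (for $j$ large) with $\lambda(\fa^j / J\fa^j)$, which is itself the $j$-th graded piece of $R \otimes_A A/J = R/JR$.

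Next I would assemble the key graded object. The complex $F_\bullet \otimes_A R$ is a complex of finitely generated graded free $R$-modules whose $n$-th graded component is $F_\bullet \otimes_A \fa^n$, so its $i$-th homology recovers the graded $R$-module $\bigoplus_n \Tor_i^A(\fa^n, A/J)$; this homology is finitely generated because $R$ is Noetherian. Since every summand is annihilated by $J$, it is in fact a finitely generated graded module over $R/JR$. Because $J$ is $\m$-primary, $\dim R/JR$ equals $\dim R/\m R$, i.e.\ the analytic spread $\ell(\fa)$, which equals $d$ since $\fa$ is $\m$-primary. The Hilbert polynomial theorem for graded modules over the standard graded $A/J$-algebra $R/JR$ then yields that $n \mapsto \lambda(\Tor_i^A(\fa^n, A/J))$ agrees with a polynomial in $n$ of degree at most $d-1$ for $n \gg 0$; combined with the reductions of the first step, the same conclusion follows for $\lambda(\Tor_i^A(A/\fa^j, A/J))$.

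The step I expect to require the most care is the dimension accounting at the end: one must verify that $R/JR$ has Krull dimension exactly $d$ via the analytic-spread identification, and then invoke the version of the Hilbert polynomial theorem that applies with an Artinian base ring $A/J$ in place of a field. Both ingredients are standard, but the bookkeeping for $A/J$-algebras is what makes this more delicate than the textbook version over $\KK[x_1,\dots,x_n]$, and justifies citing Kodiyalam directly rather than reproducing the argument from scratch.
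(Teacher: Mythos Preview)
The paper does not prove this proposition at all: it is stated as a quotation of Kodiyalam's result \cite[Theorem~2]{Kod}, with no argument given. Your outline is correct and is essentially the standard Rees-algebra proof that Kodiyalam uses, so there is nothing to compare against in the paper itself. One small remark: your conclusion that the degree is \emph{at most} $d-1$ is the precise statement (and is what the paper actually needs downstream); the case $i=0$, where the length stabilizes to the constant $\lambda(A/J)$, already shows that the degree can be strictly smaller, so the phrase ``of degree $d-1$'' in the proposition should be read as ``of degree at most $d-1$''.
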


Using the additivity of the function $\lambda$ and the fact that Tor modules are the homology modules of the complex (\ref{complex}),  we get

\begin{corollary}\label{Kod2}
Under Setup \ref{notation}, the function $\lambda( \IM \phi_j^J)$ is a polynomial of degree $d-1$ for $j\gg 0$ large enough.
\end{corollary}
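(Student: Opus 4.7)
The plan is to combine the standard additivity of length in the complex \eqref{complex} with the fact that the Tor modules are its homology, and then invoke Proposition \ref{Kod1}. The key observation will be that
$$\Tor_1^A(A/\fa^j, A/J) = \Ker \varphi_j^J / \IM \phi_j^J,$$
which lets me trade knowledge of $\IM \phi_j^J$ for knowledge of $\Ker \varphi_j^J$ together with that of a Tor module.

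First I would extract from \eqref{complex} the exact sequence at the three rightmost positions
$$0 \to \Ker \varphi_j^J \to (A/J)^{\beta_1} \xrightarrow{\varphi_j^J} A/J \to A/(\fa^j+J) \to 0$$
and apply additivity of length to obtain
$$\lambda(\Ker \varphi_j^J) = (\beta_1 - 1)\lambda(A/J) + \lambda(A/(\fa^j+J)).$$
Combined with the identity $\lambda(\Ker \varphi_j^J) = \lambda(\IM \phi_j^J) + \lambda(\Tor_1^A(A/\fa^j, A/J))$ coming from the homology interpretation, this yields
$$\lambda(\IM \phi_j^J) = (\beta_1-1)\lambda(A/J) + \lambda(A/(\fa^j+J)) - \lambda(\Tor_1^A(A/\fa^j, A/J)).$$

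Next I would show that $A/(\fa^j+J)$ eventually stabilises. Since $J$ is $\m$-primary there exists $N$ with $\m^N \subseteq J$; and since $\fa \subseteq \m$ we have $\fa^j \subseteq \m^j \subseteq J$ for every $j \geq N$. Hence $A/(\fa^j+J) = A/J$ for all such $j$, and the middle summand is eventually the constant $\lambda(A/J)$. The formula then collapses to
$$\lambda(\IM \phi_j^J) = \beta_1 \lambda(A/J) - \lambda(\Tor_1^A(A/\fa^j, A/J)) \quad \text{for } j \gg 0.$$

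Finally, $\beta_1 = \binom{j+d-1}{d-1}$ is a polynomial in $j$ of degree $d-1$, and by Proposition \ref{Kod1} applied with $i = 1$ the function $\lambda(\Tor_1^A(A/\fa^j, A/J))$ is also a polynomial of degree $d-1$ for $j \gg 0$. Their difference is therefore a polynomial in $j$ of degree at most $d-1$ for $j \gg 0$, which is the required conclusion. I do not foresee a genuine obstacle here: the only delicate point is the stabilisation of $A/(\fa^j+J)$, which is what ensures that all dependence on $j$ is captured by $\beta_1$ and by the Tor term, and that is immediate from the fact that both $\fa$ and $J$ are $\m$-primary.
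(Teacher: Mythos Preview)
Your argument is correct and follows essentially the same approach as the paper: additivity of length in the complex \eqref{complex}, the identification of $\Tor_1$ as its homology, and then Proposition~\ref{Kod1}. Your conclusion gives degree \emph{at most} $d-1$ rather than exactly $d-1$, but that is all that is actually used downstream (in Theorem~\ref{prop1}), and the paper's own one-line justification does not address this distinction either.
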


The main result of this section is the following

\begin{theorem} \label{prop1}
Let $(A,\m,\KK)$ be a Cohen-Macaulay local ring of dimension $d$. Let $\fa=(f_1, \cdots ,f_d)$ be a parameter ideal and
$\cJ=\{\cJ_{c}\}_{c\geq 0}$  an $\RR$-good $\fa$-filtration of $\m$-primary ideals satisfying  $\cJ_{c}=\fa \cJ_{c-1}$ for all $c>d$.
 Then,  there exists  $\alpha_1, \dots, \alpha_{d} \in \ZZ$ and $p(T) \in \ZZ[T]$  such that 
\begin{align*}
P_{\cJ}(T)= 
\sum_{c\in (0,1]} &  \left(  m(c)+ \cdots + m(c+d-2) T^{d-2} +  \frac{m(c+d-1) T^{d-1}}{(1-T)^d}  \right. \\
& \left.  + T^d \left(\frac{\alpha_d }{(1-T)^d} + \cdots + \frac{\alpha_1 }{(1-T)} + p(T) \right) \right) T^c.\\
\end{align*}
\end{theorem}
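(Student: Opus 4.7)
The plan is to start from the expression for $P_{\cJ}(T)$ already displayed at the beginning of Section \ref{Sec4}, fix $c\in(0,1]$, and focus on the tail $T^{d-1}\sum_{j\geq 0}m(c+d-1+j)T^{j}$, since the initial terms $m(c)+m(c+1)T+\cdots+m(c+d-2)T^{d-2}$ are already in the desired form.

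First I would telescope using Skoda's condition $\cJ_{c'}=\fa\cJ_{c'-1}$ for $c'>d$: because $c>0$, for every $j\geq 0$ we have $\cJ_{c+d-1+j}=\fa^{j}\cJ_{c+d-1}$ and, for $\varepsilon>0$ sufficiently small, also $\cJ_{c+d-1+j-\varepsilon}=\fa^{j}\cJ_{c+d-1-\varepsilon}$. By the definition of multiplicity this gives
\[
m(c+d-1+j)=\lambda\bigl(\fa^{j}\cJ_{c+d-1-\varepsilon}/\fa^{j}\cJ_{c+d-1}\bigr).
\]
I would then apply Lemma \ref{lemma2} with $K=\cJ_{c+d-1-\varepsilon}$ and $J=\cJ_{c+d-1}$, for which $\lambda(K/J)=m(c+d-1)$, to obtain
\[
\sum_{j\geq 0}m(c+d-1+j)T^{j}=\frac{m(c+d-1)}{(1-T)^{d}}+\sum_{j\geq 1}a_{j}T^{j},
\]
where $a_{j}:=\lambda(\IM\phi_{j}^{K})-\lambda(\IM\phi_{j}^{J})$. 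Multiplying by $T^{d-1}$ produces precisely the $m(c+d-1)T^{d-1}/(1-T)^{d}$ summand of the theorem and reduces the problem to showing that the remainder $T^{d}\sum_{j\geq 0}a_{j+1}T^{j}$ has the predicted shape $T^d\bigl(\sum_{k=1}^d \alpha_k/(1-T)^k+p(T)\bigr)$.

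For this I would invoke Corollary \ref{Kod2}, which asserts that $\lambda(\IM\phi_{j}^{J})$ and $\lambda(\IM\phi_{j}^{K})$ each agree with a polynomial of degree $d-1$ in $j$ for $j\gg 0$. Hence there exist $N\in\NN$ and $q(x)\in\QQ[x]$ with $\deg q\leq d-1$ such that $a_{j}=q(j)$ for all $j\geq N$. Splitting the sum at $N$ and absorbing the finitely many initial terms (which are integers, being differences of lengths) into a polynomial $p_{0}(T)\in\ZZ[T]$ leaves $T^{N}\sum_{j\geq 0}q(j+N+1)T^{j}$ to analyze. The shifted polynomial $q(x+N+1)$ takes integer values on $\ZZ_{\geq 0}$ because it matches $a_{j+N+1}\in\ZZ$. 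The Vandermonde identity $\binom{x+k-1}{k-1}=\sum_{i=0}^{k-1}\binom{k-1}{i}\binom{x}{i}$ shows that $\{\binom{x+k-1}{k-1}\}_{k=1}^{d}$ is obtained from the standard basis $\{\binom{x}{i}\}_{i=0}^{d-1}$ of integer-valued polynomials of degree $<d$ by a unipotent integer change of basis, hence is itself a $\ZZ$-basis. I may therefore write uniquely $q(x+N+1)=\sum_{k=1}^{d}\alpha_{k}\binom{x+k-1}{k-1}$ with $\alpha_{k}\in\ZZ$, and combining with $\sum_{j\geq 0}\binom{j+k-1}{k-1}T^{j}=(1-T)^{-k}$ and $T^{N}p_{0}(T)\in\ZZ[T]$ yields the desired formula.

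The main delicacy is the integrality of $\alpha_{1},\dots,\alpha_{d}$: Corollary \ref{Kod2} on its own only tells us $q\in\QQ[x]$, and one has to exploit the fact that $q$ takes integer values at all sufficiently large integers together with the change-of-basis computation above to land inside $\ZZ$. The remaining steps are a routine bookkeeping of the telescoping Skoda relation (where one must check that $c>0$ is enough to make the telescope close for every $j\geq 1$) and separating the polynomial from the non-polynomial portion of the sequence $\{a_{j}\}_{j\geq 1}$.
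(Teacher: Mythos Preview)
Your proof is correct and follows essentially the same route as the paper: decompose $P_\cJ(T)$ via Skoda, apply Lemma~\ref{lemma2} with $K=\cJ_{c+d-1-\varepsilon}$, $J=\cJ_{c+d-1}$, invoke Corollary~\ref{Kod2} to get eventual polynomiality of $a_j=\lambda(\IM\phi_j^K)-\lambda(\IM\phi_j^J)$, and expand in a binomial basis. You are in fact more careful than the paper in justifying that the $\alpha_k$ are integers; the only step you gloss over is absorbing the extra factor $T^N$ into the form $\sum_k \alpha_k/(1-T)^k + p(T)$, which the paper handles by instead writing $q(j)$ in the shifted basis $\binom{(j-1)+k-1}{k-1}$ from the outset and subtracting the finitely many initial terms $q_i(T)$ of each $\frac{1}{(1-T)^i}$.
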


\begin{proof}
We have
$$
\sum_{j \geq 0 } m(c+j) T^j = m(c)+m(c+1) T+ \cdots + m(c+d-2) T^{d-2} + T^{d-1}\sum_{j \geq 0 } \lambda(\fa^j \cJ_{c+d-1-\varepsilon}/ \fa^j \cJ_{c+d-1}) T^j $$ so applying Lemma \ref{lemma2} with $K=\cJ_{c+d-1-\varepsilon}$ and $J=\cJ_{c+d-1}$ we get 
\begin{align*}
P_{\cJ}(T)= 
\sum_{c\in (0,1]} &  \left(  m(c)+ \cdots + m(c+d-2) T^{d-2} +  \frac{m(c+d-1) T^{d-1}}{(1-T)^d}  \right. \\
& \left.  + T^{d-1} \sum_{j\geq 1} [\lambda( \IM \phi_j^{\cJ_{c+d-1-\varepsilon}})- \lambda( \IM \phi_j^{\cJ_{c+d-1}}) ] T^j \right) T^c.\\
\end{align*}
Using Corollary \ref{Kod2} we have that for  $j\gg 0$ large enough $\lambda( \IM \phi_j^{\cJ_{c+d-1-\varepsilon}})- \lambda( \IM \phi_j^{\cJ_{c+d-1}}) $ is a polynomial of degree $d-1$ that can be written as
$$
\alpha_{d} {{(j-1)+d-1}\choose{d-1}} + \cdots + \alpha_3 {{(j-1)+2}\choose{2}} + \alpha_2 j +\alpha_1 
$$
Therefore, there exists $k\in \ZZ_{>0}$  such that

\begin{flalign*}
 &T^{d-1} \sum_{j\geq 1} [\lambda( \IM \phi_j^{\cJ_{c+d-1-\varepsilon}})- \lambda( \IM \phi_j^{\cJ_{c+d-1}}) ] T^j   = \\
  &= T^{d} \left( q(T) +\sum_{j\geq k} \left[ \alpha_{d} {{(j-1)+d-1}\choose{d-1}} + \cdots + \alpha_3 {{(j-1)+2}\choose{2}} + \alpha_2 j +\alpha_1\right] T^{j-1} \right)   \\
   & = T^{d} \left( q(T) +   \left(\frac{\alpha_d }{(1-T)^d} - q_d(T) \right)+ \cdots + \left( \frac{\alpha_1 }{(1-T)} - q_1(T) \right) \right)   \\
 \end{flalign*}
where $q(T), q_d(T), \dots, q_1(T)\in \ZZ(T)$ have degree $ \leq k-2$ and the result follows after taking $p(T)= q(T) - q_d(T)- \cdots - q_1(T)$.
\end{proof}

\vskip 2mm

The following result is a direct consequence of Theorem \ref{prop1}. 

\begin{corollary}
Let $(A,\m,\KK)$ be a Cohen-Macaulay local ring of dimension $d$. Let $\fa=(f_1, \cdots ,f_d)$ be a parameter ideal and
$\cJ=\{\cJ_{c}\}_{c\geq 0}$  an $\RR$-good $\fa$-filtration of $\m$-primary ideals satisfying  $\cJ_{c}=\fa \cJ_{c-1}$ for all $c>d$.
Then, the function $m(c+j)$ is a polynomial function on $j$ of degree  less than $d$  for $j\gg 0$ large enough. 
\end{corollary}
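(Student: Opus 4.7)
The plan is to read off the claim directly from the formula for the Poincar\'e series established in Theorem \ref{prop1}. Fix $c \in (0,1]$ and consider the generating function
\[
F_c(T) := \sum_{j \geq 0} m(c+j)\, T^j.
\]
The proof of Theorem \ref{prop1} (applied with this particular $c$ and extracting the relevant summand in the sum over $c \in (0,1]$) yields an explicit presentation
\[
F_c(T) = m(c) + m(c+1)\, T + \cdots + m(c+d-2)\, T^{d-2} + \frac{m(c+d-1)\, T^{d-1}}{(1-T)^d} + T^d\left(\frac{\alpha_d}{(1-T)^d} + \cdots + \frac{\alpha_1}{1-T} + p(T)\right),
\]
with $\alpha_i \in \ZZ$ and $p(T) \in \ZZ[T]$.

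Next, I would consolidate this into the form $F_c(T) = q(T) + h(T)/(1-T)^d$ for some polynomials $q(T), h(T) \in \QQ[T]$. This is immediate because every summand on the right-hand side is either a polynomial or a rational function whose only pole is at $T = 1$ of order at most $d$. The point of this step is just to isolate the ``polynomial part'' from the ``eventually polynomial part.''

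Finally, I would invoke the standard generating function fact that if $g(T) = h(T)/(1-T)^d$ with $h \in \QQ[T]$, then the coefficient $[T^j]\, g(T)$ coincides, for $j > \deg h$, with a polynomial in $j$ of degree at most $d-1$. Concretely, one writes $h(T)/(1-T)^d$ in partial fractions as $\sum_{k=1}^d \beta_k/(1-T)^k + \widetilde{q}(T)$ and uses $1/(1-T)^k = \sum_{j \geq 0} \binom{j+k-1}{k-1} T^j$, where $\binom{j+k-1}{k-1}$ is a polynomial of degree $k-1$ in $j$. Combining this with the polynomial summand $q(T)$, which contributes zero coefficients for $j$ large, gives that $m(c+j)$ agrees with a polynomial in $j$ of degree $\leq d-1$ for $j \gg 0$.

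There is no real obstacle here: Theorem \ref{prop1} has already done the hard work (it is precisely the appearance of $(1-T)^d$ as the largest denominator that caps the degree at $d-1$, via Corollary \ref{Kod2}). The only thing to check carefully is bookkeeping, namely that the term $m(c+d-1) T^{d-1}/(1-T)^d$ does contribute a term of degree exactly $d-1$ (when $m(c+d-1) \neq 0$), but no summand contributes higher degree, which is exactly what we need for the inequality ``less than $d$.''
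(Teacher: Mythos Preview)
Your proposal is correct and is exactly the intended argument: the paper states this corollary as a direct consequence of Theorem~\ref{prop1} without further proof, and what you have written is precisely the standard generating-function unpacking of that statement. The key point you isolate---that every summand in the formula for $F_c(T)$ has denominator dividing $(1-T)^d$, forcing the coefficients to be eventually polynomial of degree at most $d-1$---is the whole content of the deduction.
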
 

\begin{remark}
In the case of multiplier ideals in a smooth variety,  Pande proved that this  result holds for all $j$  \cite[Theorem 3.2]{Pan}.
\end{remark}

Now we also specialize Theorem \ref{prop1} to the case of multiplier and test  ideals.

\begin{corollary} \label{CMcaseMult}
Suppose  $(A,\m,\KK)$ is a normal Cohen-Macaulay local ring of dimension $d$ over an algebraically closed field of characteristic zero, $\fa \subseteq A$ is any $\m$-primary ideal and  $\cJ:=\{\cJ(\fa^{c})\}_{c\geq 0}$ is the filtration given by multiplier ideals.  
Then, 
\begin{align*}
P_{\cJ}(T)= 
\sum_{c\in (0,1]} &  \left(  m(c)+ \cdots + m(c+d-2) T^{d-2} +  \frac{m(c+d-1) T^{d-1}}{(1-T)^d}  \right. \\
& \left.  + T^d \left(\frac{\alpha_d }{(1-T)^d} + \cdots + \frac{\alpha_1 }{(1-T)} + p(T) \right) \right) T^c.\\
\end{align*}
\end{corollary}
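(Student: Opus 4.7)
The plan is to reduce to the hypotheses of Theorem \ref{prop1} by replacing $\fa$ with a well-chosen minimal reduction. Since $\KK$ is algebraically closed it is in particular infinite, so by the discussion preceding Notation \ref{notation} (where the Cohen-Macaulay hypothesis is essential to conclude that a superficial sequence generating a minimal reduction of an $\m$-primary ideal is actually a regular sequence of length $d = \ell(\fa) = \dim A$), we may choose elements $f_1,\ldots,f_d \in \fa$ forming a regular sequence such that $\overline{\fa} = \overline{(f_1,\ldots,f_d)}$. Because multiplier ideals depend only on the integral closure of the defining ideal, we have $\cJ(\fa^c) = \cJ((f_1,\ldots,f_d)^c)$ for every $c \geq 0$, so the filtration $\cJ = \{\cJ(\fa^c)\}_{c \geq 0}$ is identical to the filtration associated with the parameter ideal $(f_1,\ldots,f_d)$, and in particular the multiplicities $m(c)$ are the same regardless of which generating set we use.

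The remaining step is to verify the hypotheses of Theorem \ref{prop1} for this filtration. The discreteness and Skoda statements recalled in Section \ref{Sec3} guarantee that $\cJ$ is an $\RR$-good $\fa$-filtration, and Skoda's theorem gives the required relation $\cJ(\fa^c) = \fa \cdot \cJ(\fa^{c-1})$ for $c > d$; after replacing $\fa$ by its reduction $(f_1,\ldots,f_d)$ this translates into the hypothesis $\cJ_c = (f_1,\ldots,f_d) \cJ_{c-1}$ demanded in Theorem \ref{prop1}. Applying Theorem \ref{prop1} then yields the stated formula verbatim. There is no genuine obstacle: the content lies entirely in the preparatory reduction to a parameter ideal, and the corollary is an immediate substitution thereafter — which is why it is advertised as a direct consequence.
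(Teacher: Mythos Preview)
Your proposal is correct and follows essentially the same approach as the paper: reduce to a parameter ideal with the same integral closure (using that multiplier ideals are invariant under integral closure), then invoke Theorem~\ref{prop1}. The paper's proof is terser but identical in content; your version simply spells out the verification of the hypotheses more explicitly.
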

\begin{proof}
For every $\m$-primary ideal $\fa$ there exist a parameter ideal with the same integral closure.
Since the multiplier ideals are  the same for an ideal and its integral closure \cite[Variation 9.6.39]{Laz04} (see also \cite[Corollary 5.7]{dFH}), the result follow from Theorem \ref{prop1}.
\end{proof}

\begin{corollary} \label{CMcaseTest}
Suppose that $(A,\m, \KK)$ is an $F$-finite Cohen-Macaulay local  domain of dimension $d$ over an infinite field  of characteristic $p>0$, $A_{\mathfrak{p}}$ is a strongly $F$-regular ring for all prime ideals $\mathfrak{p} \neq \m$, $\fa$ is any $\m$-primary ideal and
$\tau =\{\tau(\fa^{c})\}_{c\geq 0}$  is the filtration given by test ideals. 
Then,
\begin{align*}
P_{\tau}(T)= 
\sum_{c\in (0,1]} &  \left(  m(c)+ \cdots + m(c+d-2) T^{d-2} +  \frac{m(c+d-1) T^{d-1}}{(1-T)^d}  \right. \\
& \left.  + T^d \left(\frac{\alpha_d }{(1-T)^d} + \cdots + \frac{\alpha_1 }{(1-T)} + p(T) \right) \right) T^c.\\
\end{align*}
\end{corollary}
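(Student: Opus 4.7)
The plan is to mirror the proof of Corollary \ref{CMcaseMult}, replacing multiplier ideals by test ideals throughout. The essential hypotheses required to apply Theorem \ref{prop1} are that (i) the filtering ideal $\fa$ is generated by a regular sequence of length $d$, and (ii) $\tau = \{\tau(\fa^c)\}_{c\geq 0}$ is an $\RR$-good $\fa$-filtration of $\m$-primary ideals satisfying Skoda's identity $\tau(\fa^c) = \fa \cdot \tau(\fa^{c-1})$ for $c > d$. Both can be arranged under the hypotheses of the corollary.

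For the reduction to a parameter ideal, I would invoke the discussion preceding Notation \ref{notation}: since $\KK$ is infinite, the $\m$-primary ideal $\fa$ admits a minimal reduction generated by a superficial sequence of length $\ell(\fa) = \dim A = d$, and because $A$ is Cohen-Macaulay such a sequence is a regular sequence $f_1, \dots, f_d$. Thus $\overline{\fa} = \overline{(f_1, \dots, f_d)}$. The crucial input is that test ideals are invariant under integral closure, i.e.\ $\tau(\fa^c) = \tau(\overline{\fa}^c) = \tau((f_1, \dots, f_d)^c)$ for every $c \geq 0$ (a standard property in the theory; see \cite{HY, BMS, Bli}). Hence one may assume from the start that $\fa$ is a parameter ideal.

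For the verification of the filtration hypotheses, Skoda's theorem for test ideals as quoted earlier in Section \ref{Sec3} from \cite{Bli, HT, ST12} yields $\tau(\fa^c) = \fa \cdot \tau(\fa^{c-1})$ for all $c > d$. The strongly $F$-regular condition in the punctured spectrum guarantees, as observed in the lemma preceding Theorem \ref{thm_tau_rational}, that each $\tau(\fa^c)$ is either $\m$-primary or the unit ideal, and the accompanying discreteness statement ensures that $\tau$ is indeed an $\RR$-good $\fa$-filtration. With (i) and (ii) in hand, the formula is an immediate application of Theorem \ref{prop1}.

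The argument is essentially bookkeeping; the only nontrivial ingredient is the integral-closure invariance of test ideals, which is classical. No new obstacle arises beyond what was already handled in the multiplier-ideal analogue.
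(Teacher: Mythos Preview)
Your proof is correct and follows exactly the paper's approach: reduce to a parameter ideal via a minimal reduction (using that $\KK$ is infinite and $A$ is Cohen--Macaulay), invoke invariance of test ideals under integral closure, and then apply Theorem~\ref{prop1}. The paper's proof is simply a terse two-sentence version of what you wrote, citing \cite[Proof of Theorem 4.1]{HT} and \cite[Lemma 2.27]{BMS} for the integral-closure invariance.
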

\begin{proof}
For every $\m$-primary ideal $\fa$ there exist a parameter ideal with the same integral closure, because $\KK$ is infinite.
Since the  test  ideals are the same for an ideal and its integral closure \cite[Proof of Theorem 4.1]{HT} (see also \cite[Lemma 2.27]{BMS}), the result follow from Theorem \ref{prop1}.
\end{proof}

\begin{remark} \label{CM_dim2}
Let $(A,\m, \KK)$ be an $F$-finite normal  local ring of dimension $2$ over an infinite field  of characteristic $p>0$. Then the condition of being strongly $F$-regular in the punctured spectrum and being Cohen-Macaulay is automatically satisfied
\end{remark}

\subsection{The case of multiplier ideals in dimension two revisited}
Let $(A,\m,\KK)$ be a Cohen-Macaulay local ring of dimension $2$. Let $\fa=(f_1, f_2)$ be a parameter ideal and $\cJ=\{\cJ_{c}\}_{c\geq 0 }$ an $\RR$-good $\fa$-filtration of $\m$-primary ideals satisfying  $\cJ_{c}=\fa \cJ_{c-1}$ for all $c>2$. 
Using Theorem \ref{prop1} we get the Poincar\'e series

\begin{equation}\label{eq_dim2}
P_{\cJ}(T)= 
\sum_{c\in (0,1]}   \left(  m(c) +  \frac{m(c+1) T}{(1-T)^2} + T^2 \left(\frac{\alpha_2 }{(1-T)^2}  + \frac{\alpha_1 }{(1-T)} + p(T) \right) \right) T^c.\\
\end{equation}

We  see that, at least for the case of multiplier ideals in a complex surface with a rational singularity, this formula is much simpler. To do so we compare our formula with the one obtained in that case. 

\begin{theorem}[{\cite[Theorem 6.1]{ACAMDCGA13}}]
Let $(A,\m)$ be the local ring of a complex surface  with a rational singularity, $\fa \subseteq A$ an $\m$-primary ideal and let $\cJ:=\{\cJ(\fa^{c})\}_{c\geq 0}$ be the filtration given by multiplier ideals. Then
 $$
P_{\bf \cJ} (T)= \sum_{c\in (0,1]} \left( \frac{m(c) }{1-T} +\frac { \rho_c  T }{(1-T)^{2}}\right)\hskip 1mm T^{c}
$$ 
where  $\rho_c:=-F\cdot H_c$  is  the excess associated to the maximal jumping divisor $H_c$. 
\end{theorem}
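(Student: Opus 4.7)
The plan is to expand the Poincar\'e series into pieces indexed by $c \in (0,1]$ and exploit the one-dimensional nature of the exceptional geometry in the surface case, reducing the statement to a Riemann--Roch computation on a reduced exceptional curve.

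First, by Skoda's theorem (which in dimension two gives $\cJ(\fa^c) = \fa \cdot \cJ(\fa^{c-1})$ for $c > 2$), together with the partition of jumping numbers modulo $\ZZ$ already used in Theorem \ref{thm_rational}, one writes
$$P_{\bf \cJ}(T) = \sum_{c\in (0,1]} \left( \sum_{j \geq 0} m(c+j)\, T^{j} \right) T^{c},$$
so it suffices to prove that for each $c \in (0,1]$,
$$\sum_{j \geq 0} m(c+j)\, T^{j} = \frac{m(c)}{1-T} + \frac{\rho_c\, T}{(1-T)^{2}}.$$
Expanding the right-hand side as a power series, this is equivalent to the multiplicity identity $m(c+j) = m(c) + j\rho_c$ for every $j \geq 0$, which is the true geometric content of the statement.

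To establish this identity I would apply Proposition \ref{multiplicity_formula}. By the Remark following Definition \ref{jumping_divisor}, the maximal jumping divisor is $\ZZ$-periodic, i.e.\ $H_{c+j} = H_c$ for every $j \in \ZZ_{\geq 0}$, and integrality of $j$ yields $\lceil K_\pi - (c+j)F\rceil = \lceil K_\pi - cF\rceil - jF$. Setting $L_c := \lceil K_\pi - cF\rceil + H_c$, the multiplicity difference becomes
$$m(c+j) - m(c) = h^{0}\bigl(H_c, \cO_{H_c}(L_c - jF)\bigr) - h^{0}\bigl(H_c, \cO_{H_c}(L_c)\bigr).$$
Since the ambient variety is a surface, $H_c$ is a (possibly reducible) reduced curve supported on the exceptional locus; in the rational singularity setting its components are smooth rational curves meeting in a tree, which makes Riemann--Roch on $H_c$ particularly transparent.

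The main obstacle, and the step where the rational singularity hypothesis is essential, is verifying that the relevant $H^{1}$ groups on $H_c$ vanish so that $h^{0}$ coincides with the Euler characteristic $\chi$ for every $j \geq 0$ (and not merely for $j$ sufficiently large, as Corollary following Theorem \ref{prop1} would give). Granting this vanishing, additivity of degree and bilinearity of intersection turn the above into the intersection computation
$$m(c+j) - m(c) = -jF \cdot H_c = j\rho_c,$$
exactly as in the proof of \cite[Theorem 4.1]{ACAMDCGA13}. Substituting back into the $T^c$-decomposition and summing
$$\sum_{j\geq 0} \bigl(m(c) + j\rho_c\bigr)\, T^{j} = \frac{m(c)}{1-T} + \frac{\rho_c\, T}{(1-T)^{2}}$$
yields the claimed closed form for $P_{\bf \cJ}(T)$.
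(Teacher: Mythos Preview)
The paper does not prove this theorem: it is quoted verbatim from \cite[Theorem~6.1]{ACAMDCGA13} and used only as external input to compare against the formula \eqref{eq_dim2} coming from Theorem~\ref{prop1}, the comparison then yielding Proposition~\ref{PropExcess}. So there is no ``paper's own proof'' to compare against.

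Your proposal is a faithful outline of the original argument in \cite{ACAMDCGA13}. The reduction to the identity $m(c+j)=m(c)+j\rho_c$ is exactly \cite[Proposition~4.5]{ACAMDCGA13} (and is what motivates Question~\ref{question1} in the present paper), and your route to it via Proposition~\ref{multiplicity_formula}, the periodicity $H_{c+j}=H_c$, and Riemann--Roch on the reduced exceptional curve $H_c$ is the correct skeleton. The step you flag as the main obstacle---vanishing of the relevant $H^1$ on $H_c$ for \emph{every} $j\geq 0$, not just $j\gg 0$---is indeed the substantive point, and it is precisely where the rational-singularity hypothesis and the tree structure of the dual graph are used in \cite{ACAMDCGA13}. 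Without that vanishing, your Riemann--Roch computation only gives the Euler characteristic difference $-jF\cdot H_c$, not the $h^0$ difference, so the argument as written is a sketch rather than a complete proof; but it is the right sketch.
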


If we compare both formulas  we observe
\begin{align*}\label{eq_dim2_2}
P_{\cJ}(T) 
% & =  \sum_{c\in (0,1]}   \left(  m(c) +  \frac{m(c+1) T}{(1-T)^2} + T^2 \left(\frac{\alpha_2 }{(1-T)^2}  + \frac{\alpha_1 }{(1-T)} + p(T) \right) \right) T^c \\
&=\sum_{c\in (0,1]}   \left(  \frac{m(c) + (m(c+1)-2m(c)) T + m(c) T^2}{(1-T)^2} + T^2 \left(\frac{\alpha_2 }{(1-T)^2}  + \frac{\alpha_1 }{(1-T)} + p(T) \right) \right) \hskip 1mm T^{c}.\\
& = \sum_{c\in (0,1]}   \left( \frac{m(c) }{1-T} +\frac { \rho_c  T }{(1-T)^{2}}  +  \frac{T^2}{(1-T)^2} \left( m(c) + \alpha_2 + \alpha_1 (1-T) + p(T) (1-T)^2 \right)    \right) \hskip 1mm T^{c}
\end{align*}
and we conclude that $m(c) = -\alpha_2$, $\alpha_1=0$ and $p(T)=0$. 
If we take a closer look to these conditions we obtain a reformulation of 
\cite[Proposition 4.5]{ACAMDCGA13}
which, in particular, gives an algebraic formula for the excesses.

\begin{proposition}\label{PropExcess}
Let $(A,\m)$ be the local ring of a complex surface  with a rational singularity, $\fa \subseteq A$ an $\m$-primary ideal and let $\cJ:=\{\cJ(\fa^{c})\}_{c\geq 0}$ be the filtration given by multiplier ideals. Then, 
$$ \rho_c  =\frac{1}{j}\Big( \lambda(\Tor_2^A(A/\fa^j, A/\cJ(\fa^{c+1}))) - \lambda(\Tor_2^A(A/\fa^j, A/\cJ(\fa^{c+1-\varepsilon})))\Big)$$
for every $j\geq 1$, where   is  the  excess associated to the maximal jumping divisor $H_c$. 
In particular, 
 $$m(c+j)-m(c)=  \lambda(\Tor_2^A(A/\fa^j, A/\cJ(\fa^{c+1}))) - \lambda(\Tor_2^A(A/\fa^j, A/\cJ(\fa^{c+1-\varepsilon})))$$ 
for every $j\geq 1$. 
\end{proposition}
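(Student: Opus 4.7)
The plan is to compute $m(c+j+1) - m(c+1)$ algebraically and match it against the geometric identity $m(c+k) - m(c) = k\rho_c$ recalled in the discussion preceding the statement. As in the proof of Corollary \ref{CMcaseMult}, I would first reduce to the case where $\fa = (f_1,f_2)$ is a parameter ideal. Set $K := \cJ(\fa^{c+1-\varepsilon})$ and $J := \cJ(\fa^{c+1})$, so that $\lambda(K/J) = m(c+1)$.

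For the algebraic input, I would iterate Skoda's theorem: since $\cJ(\fa^{c'}) = \fa \cJ(\fa^{c'-1})$ for $c' > 2$, one obtains $\cJ(\fa^{c+j+1}) = \fa^j J$ and $\cJ(\fa^{c+j+1-\varepsilon}) = \fa^j K$ for every $c>0$ and $j \geq 1$. Hence $m(c+j+1) = \lambda(\fa^j K / \fa^j J)$, and Lemma \ref{lemma2} with $d = 2$ (so $\beta_1 = j+1$) yields
\begin{equation*}
m(c+j+1) = (j+1)\,m(c+1) + \lambda(\IM \phi_j^K) - \lambda(\IM \phi_j^J).
\end{equation*}
Next, I would fix the minimal free resolution of $A/\fa^j$, given by the Eagon-Northcott / Hilbert-Burch complex of a power of a regular sequence of length $2$; this resolution has length $2$ with $\beta_2 = j$. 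Consequently, for any $\m$-primary ideal $L \subseteq A$ the identification $\Tor_2^A(A/\fa^j, A/L) = \Ker \phi_j^L$ holds and we obtain $\lambda(\IM \phi_j^L) = j\,\lambda(A/L) - \lambda(\Tor_2^A(A/\fa^j, A/L))$. Substituting into the previous identity for $L = K$ and $L = J$, the terms $(j+1)m(c+1)$ and $j(\lambda(A/K) - \lambda(A/J)) = -j\,m(c+1)$ combine to leave
\begin{equation*}
m(c+j+1) - m(c+1) = \lambda(\Tor_2^A(A/\fa^j, A/J)) - \lambda(\Tor_2^A(A/\fa^j, A/K)).
\end{equation*}

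For the geometric input, the theorem of Alberich-Carrami\~nana et al.\ recalled above, together with the identification $\rho_c = m(c+1) - m(c)$ extracted from the comparison of Poincar\'e series preceding the statement, yields $m(c+k) - m(c) = k\rho_c$ for every $k \geq 0$. In particular $m(c+j+1) - m(c+1) = j\rho_c$ and $m(c+j) - m(c) = j\rho_c$. Equating with the algebraic expression and dividing by $j$ gives the claimed formula for $\rho_c$, while the ``in particular'' statement follows from $m(c+j) - m(c) = j\rho_c$. I expect the main obstacle to be in pinning down the free resolution used in Setup \ref{notation} as the length-$2$ Eagon-Northcott resolution so as to secure the Betti number $\beta_2 = j$ and the clean identity $\Tor_2^A(A/\fa^j, A/L) = \Ker \phi_j^L$; once this is in hand, the cancellation producing the stated formula is routine.
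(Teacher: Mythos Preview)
Your proof is correct and uses the same core ingredients as the paper: the length-$2$ free resolution of $A/\fa^j$ (with $\beta_2=j$) giving $\lambda(\IM\phi_j^L)=j\,\lambda(A/L)-\lambda(\Tor_2^A(A/\fa^j,A/L))$, together with the identity $m(c+k)-m(c)=k\rho_c$ from \cite{ACAMDCGA13}. The organization differs, though. The paper argues through the comparison of the two Poincar\'e-series formulas carried out just before the proposition, extracting the conditions $\alpha_1=0$, $\alpha_2=-m(c)$, $p(T)=0$, and then reads off the coefficient identities $\lambda_j^{c+1-\varepsilon}-\lambda_j^{c+1}=-j\,m(c)$; this forces a split into the range $j<k$ (handled via $p(T)=0$) and $j\geq k$ (handled via the eventual polynomiality from Corollary~\ref{Kod2}). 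You instead apply the coefficient form of Lemma~\ref{lemma2} directly to compute $m(c+j+1)-m(c+1)$ for every $j\geq 1$, which avoids both the detour through the $\alpha_i,p(T)$ constants and the case split. One small redundancy: you invoke the Poincar\'e-series comparison to obtain $\rho_c=m(c+1)-m(c)$, but this already follows from $m(c+k)-m(c)=k\rho_c$ with $k=1$, so that reference is unnecessary.
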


\begin{proof}
First recall that the morphisms $\phi_j^J$ in Setup \ref{notation} for  an $\m$-primary ideal 
$J\subseteq A$  are 
$$
\xymatrix{
0\ar[r] &
(A/J)^{j} \ar[r]^{\phi_j^J} &
(A/J)^{j+1} \ar[r]^{\varphi_j^J} &
(A/J) \ar[r] &
A/(\fa^j+J)  \ar[r] &
0},
$$ and thus
$\lambda( \IM \phi_j^{J}) = \lambda((A/J)^{j}) - \lambda( \ker \phi_j^{J}) = j  \lambda(A/J) - \lambda(\Tor_2^A(A/\fa^j, A/J)).  $

\vskip 2mm

For simplicity we  denote $\lambda_j^{c+1}$ and $\lambda_j^{c+1-\varepsilon}$ when we refer to $\lambda( \IM \phi_j^{J})$ with $J$ being the multiplier ideals $\cJ(\fa^{c+1})$ and $\cJ(\fa^{c+1-\varepsilon})$ respectively. Then, as in the proof of Theorem \ref{prop1}, we have
$$ \sum_{j\geq 1} [\lambda_j^{c+1-\varepsilon} - \lambda_j^{c+1} ] T^{j-1}    =   q(T) +   \left(\frac{\alpha_2 }{(1-T)^2} - q_2(T) \right)+  \left( \frac{\alpha_1 }{(1-T)} - q_1(T) \right)   $$ where, for some $k\gg 0$, 
\begin{itemize}
\item[] $q(T)= (\lambda_1^{c+1-\varepsilon} - \lambda_1^{c+1}) + (\lambda_2^{c+1-\varepsilon} - \lambda_2^{c+1})  T +\cdots  +(\lambda_{k-1}^{c+1-\varepsilon} - \lambda_{k-1}^{c+1})  T^{k-2}$.
\item[] $q_2(T)= \alpha_2 ( 1+ 2T + \cdots + (k-1) T^{k-2})$.
\item[] $q_1(T)= \alpha_1 ( 1+ T + \cdots + T^{k-2})$.
\end{itemize}

\vskip 2mm

Since  $\alpha_1=0$, $\alpha_2=-m(c)$ and 
$$ 0=p(T)= (\lambda_1^{c+1-\varepsilon} - \lambda_1^{c+1} + m(c)) + (\lambda_2^{c+1-\varepsilon} - \lambda_2^{c+1}+ 2 m(c))  T +\cdots  +(\lambda_{k-1}^{c+1-\varepsilon} - \lambda_{k-1}^{c+1}+(k-1) m(c))  T^{k-2}$$
we get for $j=1,\dots , k-1$

\begin{align*}  
j m(c) & =\lambda_j^{c+1} - \lambda_1^{c+1-\varepsilon} 
 = j \lambda(A/\cJ(\fa^{c+1})) - \lambda(\Tor_2^A(A/\fa^j, A/\cJ(\fa^{c+1}))) \\ & - j \lambda(A/\cJ(\fa^{c+1-\varepsilon})) + \lambda(\Tor_2^A(A/\fa^j, A/\cJ(\fa^{c+1-\varepsilon}))) \\
& = j m(c+1) + \lambda(\Tor_2^A(A/\fa^j, A/\cJ(\fa^{c+1-\varepsilon}))) - \lambda(\Tor_2^A(A/\fa^j, A/\cJ(\fa^{c+1})))
\end{align*} 

\vskip 2mm

\noindent Therefore $$ j \rho_c = \lambda(\Tor_2^A(A/\fa^j, A/\cJ(\fa^{c+1}))) - \lambda(\Tor_2^A(A/\fa^j, A/\cJ(\fa^{c+1-\varepsilon}))) $$
\vskip 2mm 

\noindent
The same formula also holds for $j\geq k$  since we have  $$\lambda_j^{c+1-\varepsilon} - \lambda_j^{c+1} = \alpha_2 j = -m(c) j.$$
\end{proof}

\bibliographystyle{alpha}
\bibliography{References}

\end{document}